\DeclareMathOperator{\dcdot}{\cdot\cdot}
\DeclareMathOperator{\Seq}{\subseteq}
\newtheorem{theorem}{Theorem}
\newtheorem{lemma} [theorem] {Lemma}
\newtheorem{proposition}[theorem]{Proposition}
\theoremstyle{theorem}
\newtheorem{corollary}[theorem] {Corollary}
\theoremstyle{definition}
\newtheorem{definition}{Definition}
\theoremstyle{theorem}
\def\BState{\State\hskip-\ALG@thistlm}
\newcounter{claimCount}
\newenvironment{claim}{\medskip

    \noindent\refstepcounter{claimCount}\textbf{Claim~\arabic{claimCount}.}}{

    \medskip}
\newenvironment{claimproof}{\noindent\textit{Proof of Claim~\arabic{claimCount}.}}{\hfill\ensuremath{\qedsymbol} \tiny{Claim~\arabic{claimCount}}

    \medskip}
\theoremstyle{definition}
\newtheorem{defn}{Definition}[section]
\theoremstyle{remark}
\theoremstyle{definition}
\theoremstyle{theorem}
\theoremstyle{definition}
\newtheorem*{notation}{Notation}
\begin{document}
\title{Cops and Robbers on Graphs with a Set of Forbidden Induced Subgraphs}	

\author{
Masood Masjoody\\
\texttt{mmasjood@sfu.ca}
\and
Ladislav Stacho\\
\texttt{lstacho@sfu.ca}
}

\maketitle

\begin{abstract}

It is known that the class of all graphs not containing a graph $H$ as an induced subgraph is cop-bounded if and only if $H$ is a forest whose every component is a path. In this study, we characterize all sets $\mathscr{H}$ of graphs with some $k\in \mathbb{N}$ bounding the diameter of members of $\mathscr{H}$ from above, such that $\mathscr{H}$-free graphs, i.e. graphs with no member of $\mathscr{H}$ as an induced subgraph, are cop-bounded. This, in particular, gives a characterization of cop-bounded classes of graphs defined by a finite set of connected graphs as forbidden induced subgraphs. Furthermore, we extend our characterization to the case of cop-bounded classes of graphs defined by a set $\mathscr{H}$ of forbidden graphs such that there is $k\in\mathbb{N}$ bounding the diameter of components of members of $\mathscr{H}$ from above.
\end{abstract}
\section{Introduction}\label{sec:intro}
A {\em game of cops and robbers} is a pursuit game on graphs, or a class of graphs, in which a set of agents, called the {\em cops}, try to get to the same position as another agent, called the {\em robber}. Among several variants of such a game, we solely consider the one introduced in \cite{Aigner}, which is played on finite undirected graphs. Hence, we shall simply refer to this variant as ``the" game of cops and robbers. Let $G$ be a simple undirected graph. Consider a finite set of cops and a robber. The game on $G$ goes as follows. At the beginning of the game (step 1) each cop will be positioned in a vertex of $G$ and then the robber will be positioned in some vertex of $G$. In each of the subsequent steps each agent either moves to a vertex adjacent to its current position or stays still, with the robber taking turn after all of the cops. The cops win in a step $i$ of the game if in that step one of the cops gets to the vertex where the robber is located. The minimum number of cops that are guaranteed to capture the robber on $G$ in a finite number of steps is called the {\em cop number}  of $G$ and denoted $C(G)$. Graph $G$ is said to be {\em $k$-copwin} ($k\in\mathbb{N}$) if $C(G)\le k$. Since the cop number of a graph is equal to the  sum of the cop numbers of its components, whenever the cop number of a graph $G$ is concerned  $G$ is considered to be connected, unless otherwise is stated. A class $\mathscr{G}$ of graphs is called {\em cop-bounded} if there is $k\in\mathbb{N}$ such that $C(G)\le k$ for every $G\in\mathscr{G}$. Among the class of graphs with a bounded cop number one can mention the class of trees, which is cop-bounded by one, and the class of planar graphs, which is cop-bounded by three \cite{Aigner}.

In this paper we study the game of cops and robbers on classes of graphs defined by a set of forbidden induced subgraphs and aim to characterize such classes which are cop-bounded.

\begin{definition}
Let $\mathscr{H}$ be a set of graphs. A graph $G$ is called {\em $\mathscr{H}$-free} if no graph in $\mathscr{H}$ is an induced subgraph of $G$. If $\mathscr{H}$ is a singleton, say $\{H\}$, we shall use $\{H\}$-free and $H$-free interchangeably.
\end{definition}

\noindent Our point of departure is the following theorem which characterizes graphs $H$ such that the class of $H$-free graphs is cop-bounded.
\begin{theorem}[\cite{CDM154}]\label{thm: H-free Cop-bounded} The class of $H$-free graphs has bounded cop number if and only if every connected component of $H$ is a path. 
\end{theorem}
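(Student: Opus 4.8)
The plan is to prove both implications, handling the ``only if'' direction by its contrapositive.

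For the ``if'' direction, suppose every component of $H$ is a path, so that $H$ is a linear forest, say a disjoint union of $c$ paths with $m$ vertices in total. Laying these paths out consecutively along a single path and separating successive blocks by one deleted vertex exhibits $H$ as an induced subgraph of $P_N$ with $N=m+c-1$. Since the induced-subgraph relation is transitive, any graph containing an induced $P_N$ also contains an induced $H$; contrapositively, every $H$-free graph is $P_N$-free, so the class of $H$-free graphs is contained in the class of $P_N$-free graphs. It therefore suffices to show that, for each fixed $t$, the class of $P_t$-free graphs is cop-bounded. I would prove this by induction on $t$ via a domination-based ``shadow'' strategy: the engine is a structural fact in the spirit of the Bacsó--Tuza theorem, namely that every connected $P_t$-free graph has a connected dominating induced subgraph $D$ whose internal structure is controlled by the smaller parameter $t-2$ (e.g.\ $G[D]$ is itself $P_{t-2}$-free). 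A bounded brigade of cops first captures the robber's shadow on $D$; since $D$ dominates $G$, once the cops control $D$ the robber is confined to vertices having a neighbour in $D$, and the remaining cops recurse on the smaller structure. Each level costs a bounded number of cops and lowers $t$ by a constant, giving a bound on $C(G)$ depending only on $t$.

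I expect this $P_t$-free step to be the main obstacle: the delicate points are making the shadow-capture and the subsequent confinement of the robber rigorous, so that the region available to the robber genuinely shrinks at every level, and verifying that the structural decomposition actually supplies a dominating subgraph with the reduced parameter.

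For the ``only if'' direction I would show that if some component of $H$ is not a path, then the class of $H$-free graphs is not cop-bounded. A connected graph that is not a path contains, as an induced subgraph, either a cycle (if it is not a tree, take a shortest cycle, which is chordless) or the claw $K_{1,3}$ (if it is a tree, it has a vertex of degree at least three whose neighbours are pairwise nonadjacent). In either case it suffices to produce a family of graphs avoiding the relevant obstruction as an induced subgraph, with cop number tending to infinity. If $H$ contains an induced cycle, set $\ell_0=|V(H)|$ and take graphs of girth greater than $\ell_0$ and minimum degree $\delta$: such graphs contain no induced copy of $H$, since any copy would force a cycle of length at most $\ell_0$; moreover graphs of girth at least $5$ satisfy $C(G)\ge\delta$, and graphs of arbitrarily large girth and minimum degree exist, so letting $\delta\to\infty$ yields the family. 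If instead $H$ contains an induced claw, then every claw-free graph is $H$-free, so it is enough to exhibit claw-free graphs of unbounded cop number; for this I would take line graphs $L(F)$, which are always claw-free, of graphs $F$ of unbounded cop number, and bound $C(L(F))$ from below in terms of $C(F)$.

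The secondary delicate point lies in this last subcase, namely verifying the lower bound on the cop number of the claw-free family; the cyclic subcase is comparatively routine once the girth-versus-minimum-degree existence results and the bound $C(G)\ge\delta$ for girth at least $5$ are invoked.
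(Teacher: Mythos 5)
Your overall architecture---reducing sufficiency to cop-boundedness of $P_t$-free graphs via the observation that a linear forest $H$ on $m$ vertices with $c$ components is induced in $P_{m+c-1}$, and splitting necessity into an induced-cycle case (large girth) and an induced-claw case (claw-free families)---is exactly the architecture of the cited proof and of this paper's machinery. But your engine for the sufficiency step has a genuine gap. First, the confinement claim is vacuous: if $D$ dominates $G$, then \emph{every} vertex of $G$ lies in $D$ or has a neighbour in $D$, so ``once the cops control $D$ the robber is confined to vertices having a neighbour in $D$'' restricts the robber not at all, and the robber's territory never shrinks. Second, ``capturing the robber's shadow on $D$'' is not available for an arbitrary connected dominating induced subgraph: shadow arguments require $D$ to be a retract of $G$, or at least an isometric subgraph guardable by boundedly many cops (as with Aigner--Fromme's geodesic paths \cite{Aigner}); a Bacs\'o--Tuza-type dominating subgraph need not be a retract or isometric, and ``controlling'' it outright could require $|V(D)|$ cops, which is unbounded. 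So even granting the structural fact that every connected $P_t$-free graph has a connected dominating induced subgraph that is $P_{t-2}$-free, the game-theoretic recursion does not get off the ground. The correct mechanism (in \cite{CDM154}, and rederived in this paper as Lemma \ref{lemma: train-chase-robber} and Corollary \ref{cor: Pk-free-graphs}) is quite different: cops are committed one at a time to the vertices of an induced path which they permanently occupy, each new cop strictly shrinking the robber's territory, and $P_k$-freeness guarantees that after at most $k-2$ cops the occupied path dominates whatever territory remains, so the robber is caught on the next move.

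On the necessity, your cycle case is sound: an induced cycle in $H$ has length at most $|V(H)|$, so graphs of girth exceeding $|V(H)|$ are $H$-free, the bound $C(G)\ge\delta$ for girth at least $5$ applies, and graphs of simultaneously large girth and large minimum degree exist. The claw case, however, rests on the asserted but unproven inequality bounding $C(L(F))$ below in terms of $C(F)$, which is precisely the load-bearing step. It can be repaired with the tool this paper already cites: the clique substitution $K(F)$ coincides with the line graph of the full subdivision of $F$, hence is claw-free (and thus $H$-free), and by the lemma quoted in Section \ref{sec:intro} neither clique substitution nor $k$-subdivision reduces the cop number; starting from cubic graphs, which are cop-unbounded by Andreae's theorem \cite{andreae1984note}, this produces the claw-free families of unbounded cop number---exactly the classes $\mathscr{G}_1$ and $\mathscr{G}_2$ used in the necessity proof of Theorem \ref{thm: gen. copbound. diam.}. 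As written, though, the sufficiency argument fails and the claw case is incomplete, so the proposal does not yet constitute a proof.
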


Theorem \ref{thm: H-free Cop-bounded} in
 particular implies that the class of claw-free graphs is not cop-bounded. In this regard, in Section \ref{sec: subclass-claw-free} we present some subclass of claw-free graphs which are cop-bounded and show winning strategies for some constant number of cops on each class. Section \ref{sec: train-chasing} is devoted to presenting a tool (Lemma \ref{lemma: train-chase-robber}) that plays a part in establishing necessary and sufficient conditions on $\mathscr{H}$ for the class of $\mathscr{H}$-free graphs be cop-bounded, in case the diameters of the members of $\mathscr{H}$ have a bound, as presented in Theorem \ref{thm: gen. copbound. diam.}, Section \ref{sec: n-claw-n-net}. In Section \ref{sec: general disconn. cop-bounded} we extend Theorem \ref{thm: gen. copbound. diam.} to the case that the diameters of components of members of $\mathscr{H}$ have a bound (Theorem \ref{thm: gen.gen. copbound. diam.}). 
\begin{notation}
Let $U$ and $W$ be disjoint subsets of the vertex set of a graph $G$. Then we write $U\Leftrightarrow_G W$ (or simply $U\Leftrightarrow W$ if the graph $G$ is understood from the context) to mean that every vertex in $U$ is adjacent to every vertex in $W$. 
\end{notation}

\begin{definition}[Clique Substitution]
Let $G(V,E)$ be a graph and $v\in V(G)$. The {\em clique substitution at} $v$ is the graph obtain from $G$ by replacing $v$ with a clique of size $|N_G(v)|$ and matching vertices in $N_G(v)$ with the vertices of that clique. The {\em clique substitution of} $G$ is the graph obtained from $G$ by performing clique substitutions at all vertices of $G$. We refer to a clique substituted for a vertex of $G$ as a {\em knot} (of $K(G)$).
\end{definition}
\begin{definition} Let $G$ be a graph and $k\in\mathbb{N}$. The operation of introducing $k$ vertices in the interior of every edge of $G$, making edges of $G$ into internally disjoint $k+2$ paths, is called {\em $k$-subdivision} of $G$.
\end{definition}

The aforementioned operations are specially useful in this research, as neither can reduce the cop number of a graph:

\begin{lemma}\cite{CDM154,berarducci1993cop}
The operations of clique substitution and $k$-subdivision (for any $k\in\mathbb{N}$) do not reduce the cop number of any graph.
\end{lemma}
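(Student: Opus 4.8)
The plan is to prove the two operations separately, and in each case to reduce the global operation to a single local one. The $k$-subdivision of $G$ is obtained by inserting vertices into edges one at a time, and since ``not reducing the cop number'' is transitive along such a chain, it suffices to show that subdividing a single edge once does not reduce the cop number. Likewise, $K(G)$ is obtained by performing clique substitutions one vertex at a time, so it suffices to treat a single clique substitution. In both cases I would argue by \emph{robber-survival transfer}: if the robber can evade $m$ cops on $G$, then it can evade $m$ cops on the modified graph $G'$; taking $m=C(G)-1$ yields $C(G')\ge C(G)$, which is exactly the claim.

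For a single edge subdivision, write $G'$ for the graph obtained from $G$ by replacing an edge $uv$ with a path $u\text{--}x\text{--}v$. I would fix a \emph{shadow} map $\sigma\colon V(G')\to V(G)$ that is the identity on $V(G)$ and sends the new vertex $x$ to a fixed endpoint, say $u$. The key observation is that $\sigma$ makes each cop \emph{at least as threatening} in $G$ as it is in $G'$: whatever vertex a real cop can reach in one move of $G'$, its shadow can reach or already dominates in one move of $G$, because $uv\in E(G)$. The robber then keeps its $G$-position equal to $\sigma$ of its own $G'$-position and replays its winning evasion strategy against the shadow cops. The only point requiring care is that a single robber move in $G$ along $uv$ must be realized by the two moves $u\to x\to v$ in $G'$.

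For a single clique substitution at a vertex $v$, write $G'$ for the result, with knot $C_v$ a clique having one vertex $c_w$ for each neighbor $w$ of $v$, where $c_w$ is the unique vertex of $C_v$ adjacent to $w$. Here the shadow map collapses all of $C_v$ to $v$ and is the identity elsewhere, and again each cop is at least as threatening in $G$ as in $G'$: a cop at a neighbor $w$ of $v$ is adjacent to $v$ in $G$, whereas in $G'$ it only threatens the single clique vertex $c_w$. Consequently the robber, while its shadow position is $v$, can sit on whichever clique vertex $c_w$ is matched to the neighbor it intends to move to next, and then leave the knot in a single step $c_w\to w$. The subtlety is the \emph{within-knot repositioning}: if the robber entered $C_v$ at $c_u$ but must depart toward $w\ne u$, it needs the extra step $c_u\to c_w$ before $c_w\to w$.

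The main obstacle, common to both operations, is exactly this \emph{synchronization} issue: a single robber move in $G$ may require two moves in $G'$ (traversing the subdivided edge, or repositioning inside a knot and then crossing), so a naive lockstep simulation falls out of phase. I would resolve it by strengthening the invariant the robber maintains: rather than merely keeping its shadow uncaught, the robber maintains a safety margin (distance at least one from every shadow cop, together with a spare move), which the extra room created by the new vertex $x$ or the knot $C_v$ always affords. During the robber's extra move it idles on $x$, respectively repositions inside $C_v$, and the more-threatening property of $\sigma$ guarantees that no real cop can have reached $x$, respectively the robber's clique vertex, during that single idle turn. Verifying that this margin is preserved in every case is the routine but essential computation; once it is in place, the transferred strategy evades $m$ cops on $G'$, establishing $C(G')\ge C(G)$ for each local operation and hence for $k$-subdivision and for clique substitution.
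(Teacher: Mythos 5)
The paper never proves this lemma at all --- it is imported with citations to Berarducci--Intrigila and to Joret--Kami\'nski--Theis --- so your attempt must be measured against those standard proofs, and against them it has a genuine gap. Both of your reductions replace the global operation by a chain of local ones (subdivide one edge; substitute a clique at one vertex), but the statements you then need --- that a \emph{single} edge subdivision, or a \emph{single} clique substitution, never lowers the cop number --- are strictly stronger than the lemma and are precisely where the known arguments would break. The standard shadow simulations are fair only because the operation is uniform: in the $k$-subdivision \emph{every} edge is stretched by the same factor $k+1$, so while the robber spends $k+1$ moves of the subdivided graph realizing one move of $G$, each cop's $k+1$ moves likewise project to at most one shadow move in $G$; similarly in the full clique substitution $K(G)$ the cops sit in knots too and pay the same within-knot repositioning tempo as the robber. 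Your intermediate graphs destroy this symmetry: while your robber spends two moves of $G'$ crossing the one subdivided edge $u\text{--}x\text{--}v$ (its shadow passes for a round, since $\sigma(x)=u$), every cop elsewhere moves at full speed, so all shadow cops advance a step during the robber's forced pass. In effect the shadow cops receive a donated tempo each time the robber uses the edge $uv$ (or passes through the single knot $C_v$), and the hypothesized winning evasion strategy on $G$ is only guaranteed against cops who move once per robber move --- it says nothing about cops with extra tempo.

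Your proposed repair --- the robber ``maintains a safety margin (distance at least one from every shadow cop, together with a spare move)'' --- is asserted, not proved, and it does not follow from having a winning evasion strategy: such a strategy guarantees only that no cop ever lands on the robber's vertex, and it is entirely consistent with cops sitting adjacent to the robber for the whole game, leaving no margin to spend while idling on $x$ or repositioning inside the knot. Concretely, in the clique-substitution case, if the robber is mid-knot at $c_z$ when a cop enters the knot at some $c_{u}$, the two are adjacent (the knot is a clique) and in shadow terms the cop's image already coincides with the robber's image at $v$ --- exactly the event your invariant was supposed to exclude but does not, because the robber's forced shadow pass is a deviation from the $G$-strategy whose safety is never verified. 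To make the transfer argument sound you would have to run it on the whole operation at once, as the cited proofs do: project $G^{(k)}$ (resp.\ $K(G)$) onto $G$, verify that each cop move projects to at most one $G$-move per unit of robber progress, and invoke uniformity so that the robber never owes tempo. As written, the local-step decomposition together with the unproven margin invariant is a gap, not a proof.
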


In addition, we shall be using the fact that cubic graphs are not cop-bounded:

\begin{theorem}\cite{andreae1984note}
For every $k\ge 3$ the class of $k$-regular graphs is cop-unbounded.
\end{theorem}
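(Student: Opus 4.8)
The plan is to prove cop-unboundedness directly: fixing $k \ge 3$, I would show that for every prescribed bound $b \in \mathbb{N}$ there is a single $k$-regular graph $G$ with $C(G) > b$, so that no uniform bound can hold across the class. This reduces the theorem to combining two ingredients: (i) for every $k \ge 3$ and every $g$ there exists a ($k$-regular) graph that is $k$-regular and has girth at least $g$; and (ii) among graphs of minimum degree at least $3$, the cop number tends to infinity as the girth tends to infinity. Granting these, given $b$ one first invokes (ii) to fix a girth threshold $g$ that forces cop number exceeding $b$, and then invokes (i) to produce a $k$-regular graph of girth at least $g$; this graph is $k$-regular with $C(G) > b$, as required.

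For ingredient (i) I would appeal to the classical existence result of Erd\H{o}s--Sachs, or to explicit algebraic constructions (cages, or Ramanujan graphs of Lubotzky--Phillips--Sarnak when $k$ is a prime power plus one), each of which supplies, for fixed degree $k \ge 3$, connected $k$-regular graphs of unbounded girth. The hypothesis $k \ge 3$ is essential and enters precisely here and in (ii): a $2$-regular graph is a disjoint union of cycles and a $1$-regular graph is a perfect matching, and both of these classes are cop-bounded, so one cannot expect an analogue below degree $3$.

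Ingredient (ii) is where the substantive work lies, and I expect it to be the main obstacle. The underlying phenomenon is that in a graph of girth $g$ and minimum degree at least $3$, every ball of radius $\rho=\lfloor (g-1)/2\rfloor$ about the robber induces a tree in which each vertex has branching at least $k-1\ge 2$. One would design a robber strategy exploiting this: the robber maintains the invariant that he sits at the root of a locally tree-like region and always has an escape branch not yet ``covered'' by any cop, using that a single cop can neutralize only a bounded portion of the exponentially branching tree while the robber retreats deeper into uncovered branches. Making the notion of ``covered'' precise and proving that the invariant is preserved is the delicate part; the cleanest route is to quote Frankl's quantitative girth--cop-number bound, which for minimum degree $d$ and girth at least $8t-3$ gives a lower bound growing like $(d-1)^{t}$, so that for fixed $k \ge 3$ the cop number grows without bound as the girth grows. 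I would emphasize that the cruder estimate $C(G)\ge \delta(G)$ available already for girth at least $5$ (Aigner--Fromme) does \emph{not} suffice here, since $\delta(G)=k$ is a fixed constant; the genuine content of (ii) is that, at a fixed minimum degree of at least $3$, the cop number is driven to infinity by the girth alone.
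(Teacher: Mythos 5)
Your proposal is sound, but note first that the paper contains no proof of this statement at all: it is quoted as a black box from Andreae's note \cite{andreae1984note}, so there is no internal argument to compare against. Judged on its own, your two-ingredient reduction is correct. Ingredient (i) is indeed supplied by Erd\H{o}s--Sachs (which works for every $k\ge 2$ and every girth threshold, and gives connected graphs). Ingredient (ii), in the quantitative form you cite --- Frankl's theorem that girth at least $8t-3$ and minimum degree exceeding $d$ force cop number exceeding $d^{t}$ --- does exactly what you need: with $\delta(G)=k\ge 3$ one may take $d=2$, so $C(G)>2^{t}$, which is unbounded in the girth even though $\delta$ is fixed, and your explicit warning that the Aigner--Fromme bound $C(G)\ge\delta(G)$ for girth at least $5$ \cite{Aigner} is useless here is exactly the right point to flag. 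Two caveats. First, your informal robber-strategy sketch (retreating into uncovered branches of the locally tree-like ball) is not by itself a proof --- a single cop is \emph{not} obviously confined to a bounded portion of the tree over time, and making the covering invariant precise is genuinely the hard content; your argument is complete only because you then quote Frankl, which is legitimate for a citation-level theorem but should be stated as the actual proof rather than a ``cleanest route.'' Second, as a historical matter this is not Andreae's argument: Frankl's bound postdates the 1984 note, and Andreae obtained unboundedness for bounded-degree graphs by a different, constructive degree-reduction route starting from graphs already known to have large cop number. This is worth knowing in the context of this paper, whose necessity arguments (Theorems \ref{thm: gen. copbound. diam.} and \ref{thm: gen.gen. copbound. diam.}) consume the theorem precisely through such constructions, applying $k$-subdivision and clique substitution to cubic graphs --- operations that do not reduce cop number --- rather than through any girth-based lower bound.
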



\section{Cops Number of Some Classes of Claw-free Graphs}\label{sec: C-R-c-b-free}\label{sec: subclass-claw-free}
We shall use $H_a$, $H_b$, $H_c$, and $H_d$ to denote claw, bull, net, and antenna, as shown in Figure \ref{fig:b}. 

\begin{figure}[H]
\centering
\begin{minipage}[t]{0.28\textwidth}
\centering
\includegraphics[width=1.35in]{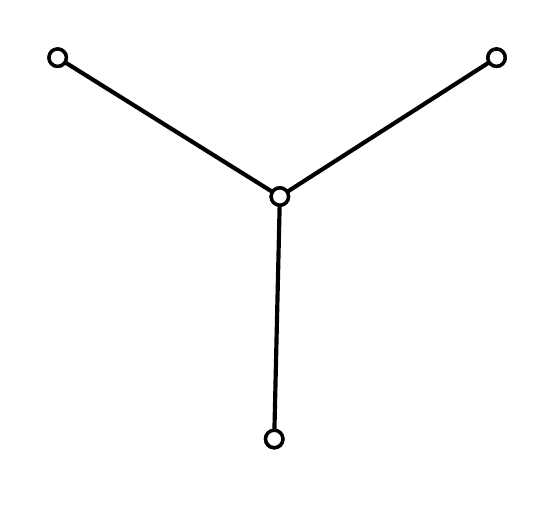}
\caption*{\textbf{(a) }Claw ($H_a$)}
\end{minipage}\hfill\begin{minipage}[t]{0.25\textwidth}
\centering
\includegraphics[width=1.3in]{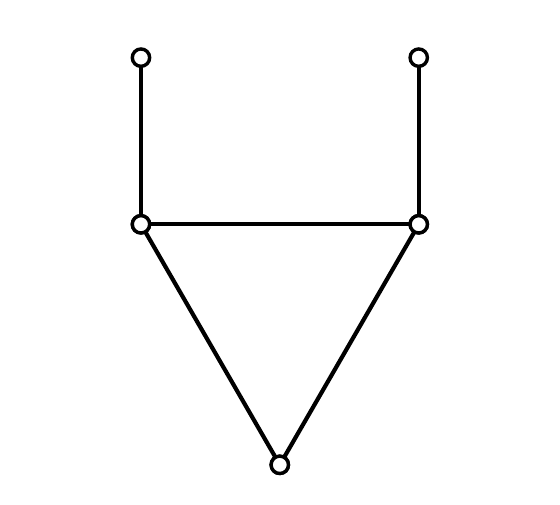}
\caption*{\textbf{(b) }Bull ($H_b$)}
\end{minipage}
\begin{minipage}[t]{0.2\textwidth}
\centering
\includegraphics[width=1.3in]{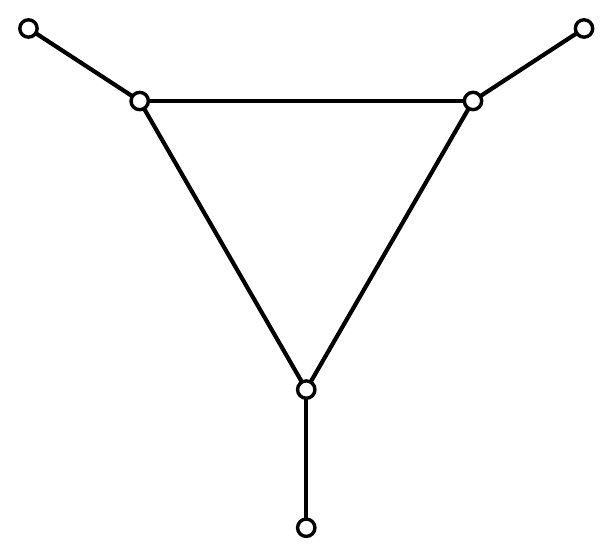}
\caption*{\textbf{(c) }Net ($H_c$)}
\end{minipage}\hfill\begin{minipage}[t]{0.25\textwidth}
\centering
\includegraphics[width=1.3in]{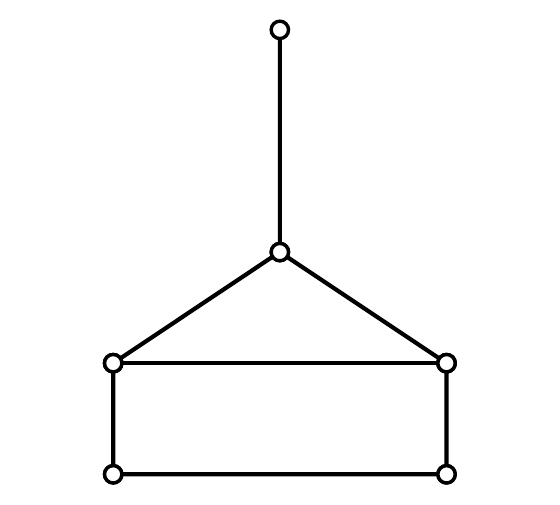}
\caption*{\textbf{(d) }Antenna ($H_d$)}
\end{minipage}\caption{Claw, Bull, Net, and Antenna}
\label{fig:b}
\end{figure}
\begin{defn}[Generalized claw and net]
With $n_1,n_2,n_3\in \mathbb{N}\cup\{0\}$ and $x\in\{a,c\}$, we denote the graph obtained by replacing the degree one vertices in $H_x$ with disjoint paths on $n_1$, $n_2$, and $n_3$ vertices by $H_x(n_1,n_2,n_3)$, calling it a generalized claw or net, according as $x=a$ or $x=c$. For $n\in\mathbb{N}\cup\{0\}$, we denote $H_x(n,n,n)$ simply by $H_x(n)$, calling it the $n$-claw or $n$-net, according as $x=a$ or $x=c$.
\end{defn}
Note that by Theorem \ref{thm: H-free Cop-bounded}, for each $x\in\{a,b,c,d\}$ the class of $H_x$-free graphs is cop-unbounded. We shall consider the game of cops and robbers on the following classes of graphs and for each of them provide an upper bound for the cop number of its members.
\begin{itemize}
\item $\mathbf{Cl}_1:$ the class of connected claw- and bull-free graphs;
\item $\mathbf{Cl}_2:$ the class of connected claw-, net-, and antenna-free graphs;
\item $\mathbf{Cl}_3:$ the class of connected claw- and net-free graphs.
\end{itemize}
In what follows we also denote the class of connected claw-free graphs by $\mathbf{Cl}$.

\begin{lemma}\label{lemma: structure cw-bl-free}
Let $G$ be a connected graph of order $\ge 2$. Pick any two adjacent vertices $u_0$ and $u_1$ of $G$ and let $U$ be the set of neighbor of $u_0$ in $G-u_1$. Let $H$ be the graph obtained from $G$ by removing $U$. Set $N_0=\{u_0\}$, and for each $i\in \mathbb{N}$ let $N_i$ be the $i$th neighborhood of $u_0$ in $H$. In other words, for every $i\in \mathbb{N}\cup \{0\}$ let $N_i=\{v\in V(H): d_H(u_0,v)=i\}$. Then:
\begin{enumerate}
\item If $G\in\mathbf{Cl}$ and $i$ an integer $\ge 2$ any pair of distinct vertices in $N_i$ with a common neighbor in $N_{i-1}$ are adjacent. In particular, $N_2$ is a clique.
\item If $G\in\mathbf{Cl}_1$ then each $N_i$ is a clique. Moreover, $N_i\Leftrightarrow N_{i-1}$ for $i\ge 1$.
\item If $G\in\mathbf{Cl}_2$ then every $N_i$ is a clique and contains a vertex that dominates $N_{i+1}$.
\item If $G\in\mathbf{Cl}_3$ each $N_i$ has independence number $\le 2$. 
\end{enumerate}
\end{lemma}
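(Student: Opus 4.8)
The backbone of all four parts is the breadth-first observation that in $H$ every edge joins two vertices lying in the same or in consecutive layers; equivalently, if $|j-j'|\ge 2$ then $N_j$ and $N_{j'}$ are completely non-adjacent. Note also that $N_0=\{u_0\}$ and $N_1=\{u_1\}$ (in $H$ the only neighbour of $u_0$ is $u_1$), and that every vertex of $N_j$ with $j\ge 1$ has a neighbour in $N_{j-1}$; I would use these facts throughout without comment. For part (a), take distinct $x,y\in N_i$ with a common neighbour $w\in N_{i-1}$ and a neighbour $z\in N_{i-2}$ of $w$. Then $w$ is adjacent to $x,y,z$, while $z\not\sim x$ and $z\not\sim y$ by the distance fact; so if $x\not\sim y$ the set $\{w;x,y,z\}$ induces a claw. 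Hence $x\sim y$, and since every vertex of $N_2$ is adjacent to $u_1$, the set $N_2$ is a clique.

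For part (b) I would prove by induction on $i$ the joint statement that $N_i$ is a clique and $N_{i-1}\Leftrightarrow N_i$; the cases $i\le 2$ follow from part (a) and the singletons $N_0,N_1$. For the step, given $v\in N_{i+1}$ and $w\in N_i$, choose a neighbour $w'\in N_i$ of $v$, a neighbour $z\in N_{i-1}$ of $w'$, and a neighbour $z'\in N_{i-2}$ of $z$. The induction hypothesis gives $w\sim w'$ and $z\sim w,w'$, so $\{z,w,w'\}$ is a triangle; the distance fact makes $z'$ a pendant at $z$ and, if $v\not\sim w$, makes $v$ a pendant at $w'$, so that $\{z',z,w,w',v\}$ induces a bull. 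Hence $v\sim w$, giving $N_i\Leftrightarrow N_{i+1}$, and then $N_{i+1}$ is a clique by part (a), since any two of its vertices share a neighbour in $N_i$.

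Parts (c) and (d) both rest on a single \emph{net lemma}: if $G$ is net-free and $x,y\in N_i$ with $i\ge 3$ are non-adjacent, then any neighbour $a\in N_{i-1}$ of $x$ and any neighbour $b\in N_{i-1}$ of $y$ are non-adjacent. Indeed, part (a) forces $x\not\sim b$ and $y\not\sim a$; assuming $a\sim b$ and taking a neighbour $a^-\in N_{i-2}$ of $a$, claw-freeness at $a$ applied to $x,b,a^-$ forces $b\sim a^-$, and then the triangle $\{a,b,a^-\}$ together with the pendants $x$ at $a$, $y$ at $b$, and a neighbour $a^{--}\in N_{i-3}$ of $a^-$ at $a^-$ induces a net. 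For part (d), an independent triple in $N_i$ would, via this lemma, hand down an independent triple among the chosen parents in $N_{i-1}$, and iterating this descent as far as $N_2$ produces an independent triple in the clique $N_2$, which is impossible; hence the independence number of every $N_i$ is at most $2$. The same descent applied to a non-adjacent \emph{pair} shows that in part (c) each $N_i$ is in fact a clique.

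The remaining and most delicate point is the dominating vertex in part (c), where antenna-freeness enters. I would argue by induction with hypothesis that $N_{i-1}$ contains a vertex $s$ adjacent to all of $N_i$ (trivial for $i\le 1$). Suppose no vertex of $N_i$ dominated $N_{i+1}$ and pick $v\in N_i$ dominating as many vertices of $N_{i+1}$ as possible. Then some $w\in N_{i+1}$ has $v\not\sim w$; a parent $v'\in N_i$ of $w$ satisfies $v\sim v'$ (clique), and by maximality there is $w'\in N_{i+1}$ with $v\sim w'$ but $v'\not\sim w'$. The vertices $v,v',w,w'$ then induce a $4$-cycle, and adjoining $s$ (adjacent to $v,v'$ but, by distance, to neither $w$ nor $w'$) together with a neighbour of $s$ in $N_{i-2}$ produces an induced antenna $H_d$, a contradiction. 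I expect this final step — pinning down exactly which configuration is forced, and verifying that it is genuinely the induced antenna rather than merely a graph containing a $4$-cycle — to be the main obstacle, and the point at which the precise shape of $H_d$ and the inductive dominating vertex $s$ must be used with care.
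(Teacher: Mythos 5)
Your proposal is correct, and for parts (c) and (d) it takes a genuinely different route from the paper. Parts (a) and (b) match the paper essentially verbatim: the same claw $\{w;x,y,z\}$ with $z$ one layer below, and the same bull (triangle in $N_{i-1}\cup N_i$ with one pendant below and one above); the paper organizes (b) around a chosen path $x_{j-2},x_{j-1},x_j$ plus a vertex $y_j\in N_j$ adjacent to $x_{j+1}$, which is your configuration relabeled. The divergence is your \emph{net lemma}. The paper has no such unified statement: for (d) it inducts on the independence number of the layers, uses the hypothesis $\alpha(N_{i-1})\le 2$ only to extract one edge $x'y'$ among the three parents, and then splits on whether $x',y'$ have a common neighbour in $N_{i-2}$ (net in one case, claw in the other); your lemma compresses exactly this dichotomy, using claw-freeness at $a$ to \emph{force} the common neighbour $a^-$ and then exhibiting the net directly. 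Your version buys two things the paper does not claim. First, since it needs only claw- and net-freeness, your pair-descent shows every $N_i$ is a clique already in $\mathbf{Cl}_3$, strictly stronger than the stated $\alpha(N_i)\le 2$ of (d) — you prove more than asked, and your argument is sound (I checked all layer-distance non-adjacencies in your net; the case $i=3$ works with $a^-=u_1$, $a^{--}=u_0$). Second, it streamlines (c): because you already know $N_{i+1}$ is a clique, your $w\sim w'$ holds and only the antenna configuration can arise, whereas the paper, which at that stage knows only that $N_i$ is a clique (via the dominating vertex $z'$ and part (a)), must split into two cases — $x'\not\sim y'$ giving a net and $x'\sim y'$ giving the antenna — on the same six vertices $\{z,z',x,y,x',y'\}$ that you call $\{t,s,v,v',w,w'\}$. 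Do make the dependence explicit in a final write-up: the assertion that $v,v',w,w'$ induce a $C_4$ silently uses $w\sim w'$, i.e.\ your clique claim for $N_{i+1}$, so the net-lemma descent must be stated before the dominating-vertex induction; with that ordering your antenna identification (triangle $v v' s$, pendants $w'$ at $v$, $w$ at $v'$, $t$ at $s$, plus the edge $ww'$, which is precisely the net with one edge added between two pendant vertices) is exactly the graph $H_d$, and the maximality argument producing $w'$ with $v\sim w'$, $v'\not\sim w'$ is the same counting step the paper leaves implicit behind ``the latter implies there exist $x,y,x',y'$.''
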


\begin{figure}[h]
\begin{center}
 \includegraphics[scale=1.35]{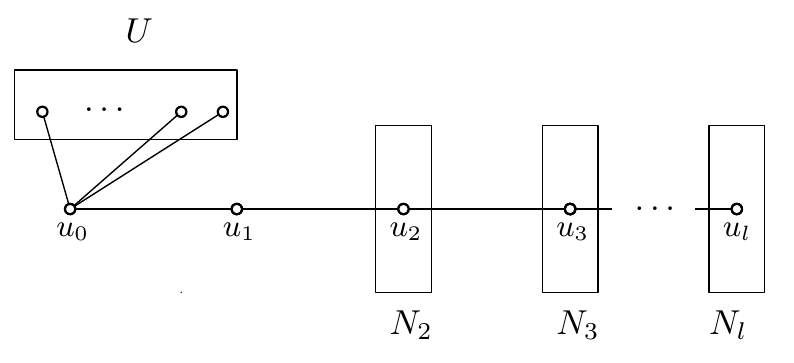}
 \caption{The structure of the graph $H$ obtained by excluding every but one neighbor of a vertex $u_0\in V(G)$. We have $U=N_G(u_0)\setminus\{u_1\}$, $H=G-U$, and $u_0,u_1,\dots,u_l$ is a longest geodesic path in $H$ starting at $u_0$. Each $N_i$ is the bag corresponding to $u_i$, defined as in Lemma \ref{lemma: structure cw-bl-free}} \label{pic: c-b-free-lemma}
\end{center}
\end{figure}
\begin{proof}\textbf{(a) }Let $i\ge 2$ and $u,v$ be distinct vertices in $N_i$ with a common neighbor $w\in N_{i-1}$. By definition, there is a vertex $z\in N_{i-2}$ such that $wz\in E(G)$. As such, one has $zu\not\in E(G)$ and $zv\not\in E(G)$. Therefore, $uv\in E(G)$, for otherwise $G[\{u,v,w,z\}]$ would be a claw, a contradiction. Furthermore, for every pair $x,y$ of distinct vertices in $N_2$ one must have $xy\in E(G)$, for otherwise $G[\{u_0,u_1,x,y\}]$ would be a claw, a contradiction. Hence, $N_2$ is a clique. \noindent\textbf{(b) }According to (a), the claim holds for $i\le2$. Proceeding by induction on $i$, suppose $j\ge 2$ and that the claim holds for all $i\in[1\dcdot j]$. If $N_{j+1}=\varnothing$, the claim vacuously hold for $i=j+1$. Otherwise, for each $k\in [j-2\dcdot j+1]$ choose any $x_{k}\in N_{k}$. Note that by (a) it suffices to show that $x_j x_{j+1}\in E(G)$. To this end, suppose, toward a contradiction, that $x_j x_{j+1}\not\in E(G)$. Pick any $y_j\in N_j$ such that $y_j x_{j+1}\in E(G)$. Then, by the induction hypothesis we have $x_{j-1}y_j,x_j y_j\in E(G)$ and $x_{k-1}x_{k}\in E(G)$ for each $k\in[j-2\dcdot j]$. As such, $G[\{x_{j-2},x_{j-1},x_j,y_j,x_{j+1}\}]$ will be a bull, a contradiction (Figure \ref{pic: c-b-free-lemma-e}).
\begin{figure}[h]
\begin{center}
 \includegraphics[scale=1.25]{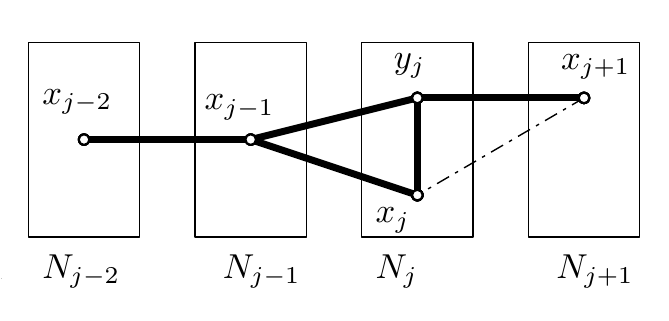}
\caption{Proof of Lemma \ref{lemma: structure cw-bl-free}(b), by induction on $i$: Suppose $j \ge 2$ and (b) holds for every $i\le j$. For each $k$ pick $x_k\in N_k$. If $x_{j+1}x_j \not\in E(G)$, then for each $y_j\in N_j$ with $y_j x_{j+1}\in E(G)$ the graph $G[\{x_{j-2},x_{j-1},x_j,y_j,x_{j+1}\}]$ would be a bull; a contradiction.}\label{pic: c-b-free-lemma-e}
\end{center}
\end{figure}

\noindent\textbf{(c) } According to (a) it suffices to show that every $N_i$ ($i\ge 2$) contains a vertex that dominates $N_{i+1}$. We shall proceed by induction on $i$, noting that this is true for $i\in\{0,1\}$ since $N_0,N_1$ are singletons. Suppose $i\ge 2$ and assume $N_k$ contains a vertex that dominates $N_{k+1}$ whenever $k<i$. Let $z\in N_{i-2}$ and $z'\in N_{i-1}$ such that $N_G(z)\supseteq N_{i-1}$ and $N_G(z')\supseteq N_{i}$. Then, assume, toward a contradiction, that no vertex in $N_i$ is adjacent to all vertices in $N_{i+1}$. The latter implies there exist $x,y\in N_{i}$ and $x',y'\in N_{i+1}$ such that $E[G\{x,x',y,y'\}]=\{xx',yy',xy\}$. But then $G[\{x,x',y,y',z,z'\}]$ would be a net or a monk, according as $xy\not\in E(G)$ or $xy\in E(G)$, a contradiction.
\begin{figure}[h]
\begin{center}
 \includegraphics[scale=1.25]{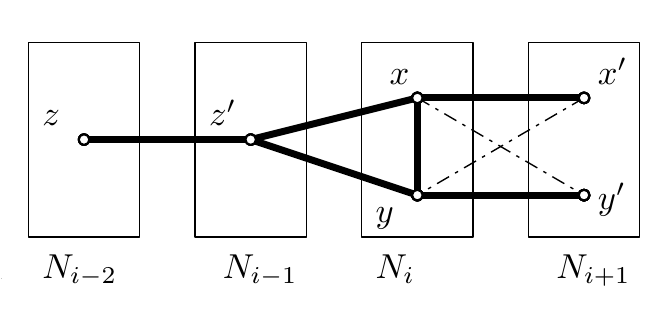}
\caption{Proof of Lemma \ref{lemma: structure cw-bl-free}(c), by induction on $i$. With $i\ge 2$, let $z\in N_{i-2}$ and $z'\in N_{i-1}$ be dominating $N_{i-1}$ and $N_{i}$, respectively. If $(c)$ fails, there will be $x,y\in N_i$ and $x',y'\in N_{i+1}$ such that $E(\{x,y\},\{x',y'\}=\{xx',yy'\}$; thereby $G[\{x,x',y,y',z,z'\}]$ will be a net or an antenna; a contradiction.}\label{pic: c-b-free-lemma-c}
\end{center}
\end{figure}

 \noindent\textbf{(d) }In light of (a) $G[N_i]$ has independence number $\le 2$ whenever $i\le 2$. Proceeding by induction on $i$, suppose $i\ge 3$ and that $G[N_{i-1}]$ has independence number $\le 2$. Toward a contradiction, suppose $G[N_i]$ had independence number $\ge 3$. As such, let $\{x,y,z\}$ be a set of three independent vertices in $N_i$. By (a), no two of $x,y,z$ have a common neighbor in $N_{i-1}$. Therefore, there exist distinct vertices $x',y',z'\in N_{i-1}$ such that $E(\{x,y,z\},\{x',y',z'\})=\{xx',yy',zz'\}$. Moreover, by induction hypothesis, $|E(G[\{x',y',z'\}])|\ge 1$. Assume, without loss of generality, that $x'y'\in E(G)$. If $x',y'$ had a common neighbor $w\in N_{i-2}$, then for every neighbor $w'$ of $z$ in $N_{i-3}$ the graph $G[\{x,x',y,y',w,w'\}]$ would be a net, a contradiction. Hence for any neighbor $x''$ of $x$ in $N_{i-1}$ one has $x''y'\not\in E(G)$; thereby, $G[\{x,x',x'',y\}]$ has to be a claw, a contradiction.
\begin{figure}[h]
\centering
\begin{minipage}[t]{0.45\textwidth}
\centering
\includegraphics[width=2.8in]{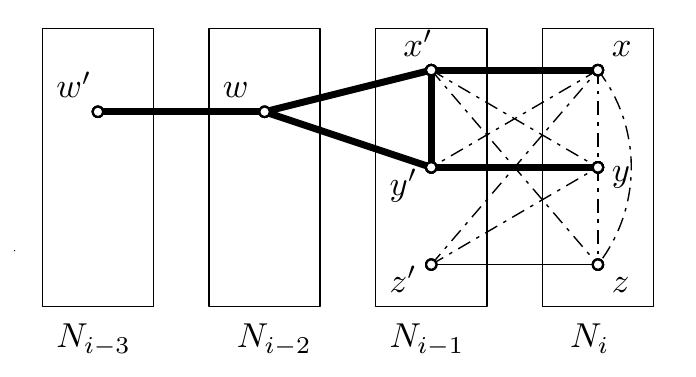}
\caption*{\small{If $x',y'$ have a common neighbor $w\in N_{i-1}$ and if $w'\in N_{i-3}$ is adjacent to $w$ then $G[\{x,x',y,y',w,w'\}]$ is a net.}}
\end{minipage}\hfill\begin{minipage}[t]{0.45\textwidth}
\centering
\includegraphics[width=2.8in]{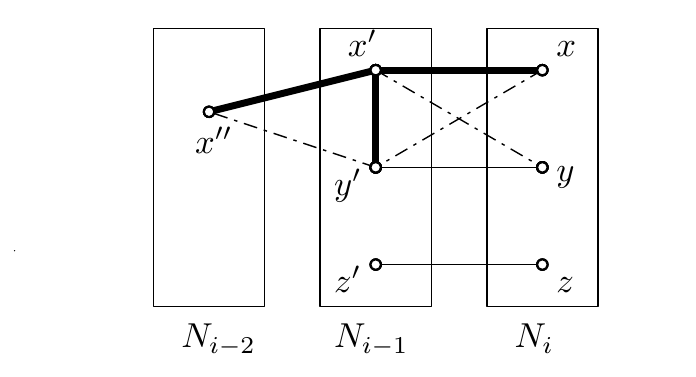}
\caption*{\small{If $x',y'$ have no common neighbor $w\in N_{i-1}$ then for every neighbor $x''$ of $x'$ in $N_{i-2}$ the graph $G[\{x,x',x'',y'\}]$ is a claw.}}
\end{minipage}
\caption{Proof of Lemma \ref{lemma: structure cw-bl-free}(d), by proceeding toward a contradiction. Assume $i\ge 3$, $N_{i-1}$ has independence number $\le 2$, and but $N_{i}$ contains three independent vertices $x,y,z$ with neighbors $x',y',z'$ in $N_{i-1}$ such that $x'y'\in E(G)$.}\label{pic: c-b-free-lemma-d}
\end{figure}
\end{proof}
\begin{theorem}\label{thm: cops-cw-bl-free}\
\begin{enumerate}
\item If $G\in\mathbf{Cl}_1$ then $G$ is two-copwin.
\item If $G\in\mathbf{Cl}_2$ then $G$ is three-copwin.
\item If $G\in\mathbf{Cl}_3$ then $G$ is five-copwin.
\end{enumerate}
\end{theorem}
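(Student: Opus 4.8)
The plan is to prove all three parts with a single pursuit strategy built on the layered decomposition $N_0,N_1,N_2,\dots$ supplied by Lemma~\ref{lemma: structure cw-bl-free}, combining one \emph{blocker} cop with a band of \emph{wall} cops whose size is dictated by the structural strength of the layers in each class. First I would fix adjacent $u_0,u_1$, form $U$, $H=G-U$, and the $N_i$ as in the lemma, and station one cop permanently at $u_0$. Since this cop dominates $N_G[u_0]=\{u_0\}\cup U\cup\{u_1\}$, the robber is captured the instant it enters $U$, $u_1$, or $u_0$; hence it is confined to $\bigcup_{i\ge 2}N_i$. The decisive point is that on this region $G$ and $H$ induce the same graph, so by the BFS property of $H$ every edge joins two vertices of a common $N_i$ or vertices of consecutive layers $N_i,N_{i+1}$. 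Thus the robber lives on a strictly layered board and can change its layer index by at most one per move.

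Next I would run a monotone wall. Say a team of cops \emph{guards level $i$} if it occupies vertices dominating $N_i$, so the robber is caught upon entering $N_i$; two adjacent guarded levels then form a barrier the robber cannot cross. Starting from the bottom and repeatedly advancing the pair of guarded levels one step upward, the robber---already blocked below by the $u_0$ cop---is forced into ever higher layers and finally trapped in the topmost nonempty layer. The crux is that each advance must be \emph{gap-free}: while the cops forming the lower guarded level relocate to the level just above the upper one, the upper guarded level must remain intact so that no slip-through window opens. I would verify this move by move, using the structural lemma to route the relocating cops (in class $\mathbf{Cl}_2$ through the vertex of $N_i$ dominating $N_{i+1}$ furnished by part (c)).

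The cop counts then fall out of how expensive a guarded level is in each class, plus the single blocker. In $\mathbf{Cl}_1$, part (b) makes $N_{i-1}\cup N_i\cup N_{i+1}$ dominated by one cop placed in $N_i$ (each $N_i\Leftrightarrow N_{i-1}$ and each $N_i$ is a clique), so a single advancing cop is already a gap-free wall, giving $1+1=2$. In $\mathbf{Cl}_2$, part (c) makes each layer a clique, so one cop guards a level; holding two consecutive levels and leapfrogging costs two wall cops, giving $2+1=3$. In $\mathbf{Cl}_3$, part (d) gives each layer independence number $\le 2$, whence its domination number is $\le 2$ (pick any $v$; its non-neighbors form a clique, so $v$ together with one non-neighbor dominates the layer), so each level needs two cops and a two-level wall needs four, giving $4+1=5$.

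The main obstacle I anticipate is not the counting but rigorously certifying gap-free advancement together with the start-up phase: one must handle the robber's initial placement (possibly already high in the layering, or in a component of $H$ not containing $u_0$), show the wall can be erected near the bottom before the robber can interfere, and confirm that relocating the lower guards never momentarily exposes a crossable layer. I would encapsulate the advancement as the invariant ``the robber's layer index strictly exceeds that of the upper guarded level'' and prove it is preserved by one full round of moves in each class, which is precisely where parts (b)--(d) of Lemma~\ref{lemma: structure cw-bl-free} are consumed.
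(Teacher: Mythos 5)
Your proposal is essentially the paper's own proof: the same blocker cop held at $u_0$ to confine the robber to the layers $N_i$ of $H$, plus class-specific advancing guards whose sizes come from Lemma~\ref{lemma: structure cw-bl-free}(b)--(d) (one chasing cop for $\mathbf{Cl}_1$, a leapfrogging pair along dominating vertices for $\mathbf{Cl}_2$, and two leapfrogging pairs on the $2$-element dominating sets $A_j$ for $\mathbf{Cl}_3$), yielding the same counts $2$, $3$, $5$. The one start-up obstacle you flag --- the robber initially sitting in a component of $H$ not containing $u_0$ --- is exactly what the paper disposes of by choosing $u_1$ \emph{after} the robber's placement, as the second vertex of a $(u_0,r)$-geodesic, which forces the robber into the component $H'$ of $u_0$; with that small adaptive adjustment your argument coincides with the paper's.
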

\begin{proof}In each case, we shall put all the cops in hand initially at the same vertex, say, $u_0$. When the robber takes its first position, say, $r$, consider a geodesic path $P$ in $G$ from $u_0$ to $r$. With $u_1$ being the vertex of $P$ following $u_1$, set $U=N_G(u_0)\setminus \{u_1\}$ and, then, define $H$ and the $N_i$ as in Lemma \ref{lemma: structure cw-bl-free}. Furthermore, let $H'$ be the component of $u_0$ in $H$. For the entire duration of the game keep one cop, say $C_1$, at $u_0$. This, forces the robber to stay in $H'$ and, hence, to the sets $N_i$, according to Lemma \ref{lemma: structure cw-bl-free}. Since $H'$ is finite, there is a unique $k\in \mathbb{N}$ such that $N_k\not=\varnothing$ and $N_{k+1}=\varnothing$.   \textbf{(a) } Let $C_2$ be the other cop in play. By the strategy of moving $C_2$ in $H'$ along any shortest path from $N_0$ to $N_k$, in (at most) $k$ steps $C_2$ either captures the robber or gets to an $N_i$ where the robber is located. In the latter case $C_2$ will be able to capture the robber in its very next move, according to Lemma \ref{lemma: structure cw-bl-free}. \textbf{(b) }Let $C_2, C_3$ be the other cops in play.  According to Lemma \ref{lemma: structure cw-bl-free}(c), there is an induced path $x_0,\dots,x_{k-1}$ in $G$ from $N_0$ to $N_{k-1}$ such that $N[x_j]\supseteq N_{j+1}$ for each $j\le k-1$. Hence, the strategy of moving $C_2$ to $x_{i-1}$ and $C_3$ to $x_i$ in every step $i$ of the game, in no more than $k-1$ steps leads either to the robber's capture ot to having a cop to a vertex neighboring the position of the robber on the cops' turn. \textbf{(c) }Let $C_2, C_3,C_4,C_5$ be the other cops in play. By Lemma \ref{lemma: structure cw-bl-free}(d), for each $j\in[1\dcdot k]$ one can choose $A_j\Seq N_j$ such that $|A_j|\le 2$ and $N[A_j]\Seq N_j$. As such, we initially have $C_2,C_3$ and $C_4,C_5$ saturate $A_0$ and $A_1$, respectively. Note that to avoid being captured, the robber has to stay in $\bigcup_2^k N_j$. We proceed in the following recursive fashion. With two groups of two cops saturating $A_{i-1}$ and $A_i$ for some $i\ge 1$ and the robber being in $\bigcup_{i+1}^k N_j$, in a finite number of steps we will have the cops in $A_{i-1}$ to $A_{i+1}$, while keeping the cops in $A_i$ fixed in their position. The latter guarantees the robber's capture upon leaving $\bigcup_{i+1}^k N_j$. Therefore, having cops in $A_{i+1}$ will push the robber to stay in $\bigcup_{i+1}^k N_j$. Therefore, this presents a strategy for reducing robber's territory; i.e. the cops have a winning strategy. 

\end{proof}

 


\section{Train-chasing the robber}\label{sec: train-chasing}
\begin{proposition}\label{prop: for train-chasing}
Consider an instance of the game of cops and robbers on a graph $G$. Suppose on the cops' turn there are at least two cops $C_1,C_2$ in a vertex $v$ of the graph while the robber is in a vertex $w$. Let $P$ be any $(w,v)$-geodesic path in $G$. Let  $u$ be the second last vertex of $P$ and set $X=N_G(w)\setminus \{u\}$. Moreover, let $H$ be the component of $v$ in $G-X$. Then moving $C_2$ to $u$ and keeping $C_1$ and $C_2$ on $v$ and $u$ for the rest of the game forces the robber to stay in $H$.
\end{proposition}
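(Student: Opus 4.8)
\noindent\emph{Proof plan.} The plan is to exhibit $X$ as a separator of $G$ whose entire boundary is guarded by the two stationary cops, so that the robber --- which we will see starts inside $H$ --- can never step out of $H$ once $C_1$ and $C_2$ are fixed on $v$ and $u$. Concretely I would split the argument into three pieces: (i) the robber's current vertex $w$ lies in $H$; (ii) any single robber move leaving $H$ must land on a vertex of $X$, and every such vertex lies in the closed neighborhood of a stationary cop, hence yields an immediate capture; and (iii) an induction on the rounds of play propagates the invariant ``the robber is in $H$'' for the remainder of the game.

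For (i) I would use that $P$ is a geodesic and hence chordless: apart from the retained neighbor $u$, no vertex of $P$ is adjacent to the vertex whose punctured neighborhood is $X$, so the portion of $P$ linking $w$ to $v$ is disjoint from $X$ and therefore survives in $G-X$. This places $w$ and $v$ in a common component of $G-X$, namely $H$, so the invariant holds at the instant the cops complete their move.

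For (ii) the structural point is that, by the very definition of $H$ as the component of $v$ in $G-X$, the only edges of $G$ joining $H$ to $V(G)\setminus H$ have their far endpoint in $X$; thus a robber step out of $H$ necessarily ends on a vertex of $X$. It then suffices to check that each vertex of $X$ is equal or adjacent to $v$ or to $u$, for in that case the robber, having just moved onto such a vertex, is captured when the cop parked at $v$ (or at $u$) moves onto it. With (i) and (ii) in hand, (iii) is a routine induction: at the start of any round the robber is in $H$; if the cops hold $v$ and $u$, then the robber either remains in $H$ or crosses into $X$ and is captured, so in every surviving line of play it stays in $H$.

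The main obstacle is the domination claim buried in (ii): one must verify that parking the cops on $v$ and $u$ genuinely covers the whole frontier $X$ of $H$, leaving the robber no safe vertex on which to land after crossing. This is exactly where the role of $u$ as the geodesic neighbor and the description of $X$ as a punctured neighborhood must be used in an essential way; once the frontier is shown to be guarded, the separation in (i) and the induction in (iii) are only bookkeeping.
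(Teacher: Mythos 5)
Your plan is correct and there is no divergence of approach to report: the paper's entire proof of this proposition is the single word ``Obvious,'' and the content you supply --- $X$ as the frontier of $H$, the geodesic argument placing $w$ in $H$, the observation that every exit from $H$ lands on a cop-dominated vertex of $X$, and the round-by-round induction --- is exactly the routine argument being waved off. One caveat worth recording: your step (i) is valid only under the evidently intended reading $X=N_G(v)\setminus\{u\}$, which matches $X_i=N_{H_i}(v_i)\setminus\{v_{i+1}\}$ in Lemma \ref{lemma: train-chase-robber}; as literally printed, $X=N_G(w)\setminus\{u\}$ with $u$ the second-last vertex of the $(w,v)$-geodesic would delete \emph{every} neighbor of $w$ whenever $d_G(w,v)\ge 3$ (since then $u\notin N_G(w)$), isolating $w$ outside $H$ and falsifying the claim --- your assertion that, apart from $u$, no vertex of $P$ is adjacent to the punctured center only holds when that center is $v$, so your proof silently (and correctly) repairs a typo in the statement, and likewise your frontier-domination step (ii) is immediate there because $X\subseteq N_G(v)$ is guarded by the cop parked at $v$ alone, the cop at $u$ being needed only for the iteration in Lemma \ref{lemma: train-chase-robber}.
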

\begin{proof}
Obvious.
\end{proof}
\begin{defn}
Let $G$ a graph and $U$ be the set of all triples $(u,v,H)$ where $H$ is a connected subgraph of $G$ and $u,v\in V(H)$ with $d_H(u,v)\ge 2$. A {\em chasing function for} $G$ is a function $\theta$ mapping every triple $(u,v,H)\in D$ onto the neighbor of $u$ along a $(u,v)$-geodesic path in $H$. 
\end{defn}
\begin{lemma}\label{lemma: train-chase-robber}
Consider an instance of the game of cops and robber on a graph $G$. Let $\theta$ be a chasing function for $G$. Let $k\in \mathbb{N}$ and suppose on the cops' turn in step one there are $k$ cops $C_1,\dots,C_k$ in a vertex $v_1$ of the graph while the robber is located in a vertex $w_1$. Further suppose the robber has and will use a strategy to survive the the following $k$ moves of $C_1,\dots, C_k$. Denote the following robber's positions with $w_2,\dots,w_{k-1}$. Further, recursively define the $H_i$s ($i\in[1\dcdot k]$) and $v_i$s $i\in[2\dcdot k]$ as follows:
\begin{itemize}
\item $H_1=G$;
\item $v_{i+1}=\theta(v_i,w_i,H_i)$ for each $i\in[1\dcdot k]$;
\item $X_i=N_{H_i}(v_i)\setminus \{v_{i+1}\}$ for each $i\in[1\dcdot k]$;
\item $H_{i+1}:$ the component of $v_1$ in $H_{i}-X_i$ for each $i\in [1\dcdot k]$.
\end{itemize}
Then the following hold:
\begin{enumerate}
\item Every $H_i$ is an induced subgraph of $G$.
\item If $uv\in E(G)\setminus E(H_{k+1})$ such that $u\in V(H_{k+1})$, then $v\in\bigcup_1^{k}X_i$.
\item Vertices $v_1,\dots,v_{k+1}$, in that order, induce a path in $H_k$.
\item The cops can play so than on the cops' turn in step $k$  every $C_i$, $i\in[1\dcdot k]$, is located in vertex $v_i$.
\item Keeping every $C_i$ in $v_i$ for the rest of the game forces the robber to stay in $H_{k+1}$.
\end{enumerate}
\end{lemma}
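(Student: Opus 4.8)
The plan is to build the five items around one structural observation: after the set $X_i=N_{H_i}(v_i)\setminus\{v_{i+1}\}$ is deleted, the vertex $v_i$ becomes a pendant of $v_{i+1}$ in $H_{i+1}$, so that \emph{no} vertex of $V(H_{i+1})\setminus\{v_{i+1}\}$ is adjacent to $v_i$ in $G$. I first dispose of (a): since $H_1=G$ is induced, since deleting a vertex set from an induced subgraph again yields an induced subgraph, and since passing to a connected component only restricts the vertex set, an immediate induction on $i$ shows every $H_i$ is an induced subgraph of $G$. This is the fact I will quote repeatedly to move between adjacency in $G$ and adjacency in $H_i$. (Throughout I read $H_{i+1}$ as the component of $H_i-X_i$ containing the robber, equivalently the one containing $v_{i+1}$, which is the reading that keeps the nested family $H_1\supseteq H_2\supseteq\cdots$ well defined.)

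For (c) I will prove the sharper statement that $\{v_1,\dots,v_{k+1}\}$ induces exactly the path $v_1v_2\cdots v_{k+1}$. Adjacency of consecutive vertices is immediate, since $v_{i+1}=\theta(v_i,w_i,H_i)$ is by definition a neighbour of $v_i$ along a geodesic of $H_i$. For non-adjacency of $v_i$ and $v_j$ with $j\ge i+2$ I invoke the observation above: $v_j\in V(H_{j-1})\subseteq V(H_{i+1})$ and $v_j\neq v_{i+1}$, so if $v_iv_j\in E(G)$ then, $H_i$ being induced, $v_j\in N_{H_i}(v_i)\setminus\{v_{i+1}\}=X_i$, contradicting $v_j\in V(H_{i+1})$. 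Distinctness follows from the same picture: $v_i$ is a pendant of $v_{i+1}$ in $H_{i+1}$, and because the survival hypothesis keeps every later robber position at distance $\ge 2$ from the advancing cluster, no subsequent geodesic step returns to $v_i$; hence $v_i\in X_{i+1}$, giving $v_i\in V(H_{i+1})\setminus V(H_{i+2})$, while $v_j\in V(H_{i+2})$ for every $j\ge i+2$.

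Item (b) is then a clean corollary of (a). Given $u\in V(H_{k+1})$ and $uv\in E(G)\setminus E(H_{k+1})$, induced-ness of $H_{k+1}$ forces $v\notin V(H_{k+1})$. Let $i$ be least with $v\notin V(H_{i+1})$, so $v\in V(H_i)$; as $u\in V(H_{k+1})\subseteq V(H_{i+1})$ lies in the chosen component of $H_i-X_i$ we have $u\notin X_i$, and since $u,v\in V(H_i)$ with $uv\in E(G)$ and $H_i$ induced, $uv\in E(H_i)$. If $v\notin X_i$ the edge $uv$ survives in $H_i-X_i$, placing $v$ in the same component as $u$, i.e. $v\in V(H_{i+1})$, a contradiction; hence $v\in X_i\subseteq\bigcup_1^k X_i$. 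The very same argument, run with $i$ in place of $k$, shows that at every stage the edges leaving $H_{i+1}$ land in $\bigcup_{j\le i}X_j$, which is what I will need below.

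Finally I turn to the dynamic items (d) and (e), which I expect to be the crux. The cops play as a train along the induced path of (c): all $k$ cops start on $v_1$; in step $i$ the strategy reads the robber's position $w_i$, computes $v_{i+1}=\theta(v_i,w_i,H_i)$, drops $C_i$ at $v_i$, and advances the remaining cops one edge to $v_{i+1}$, which is legal since $v_iv_{i+1}\in E(G)$. I will run an induction on $i$ whose invariant is that after this move $C_1,\dots,C_i$ occupy $v_1,\dots,v_i$ and the robber is confined to $H_{i+1}$: by the stage-$i$ form of (b) every edge leaving $H_{i+1}$ enters some $X_j$ with $j\le i$, and every vertex of $X_j=N_{H_j}(v_j)\setminus\{v_{j+1}\}$ is adjacent to $v_j$, the seat of $C_j$, so any escape would be captured on the cops' next turn. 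Carrying the induction to $i=k$ gives (d), and then freezing the cops gives (e). The main obstacle is exactly this simultaneous induction: I must show that dropping $C_i$ at $v_i$ maintains the \emph{positive} invariant $w_{i+1}\in V(H_{i+1})$ -- that the robber stays inside the shrinking region, not merely that it is uncaptured -- since $v_{i+1}$, $X_i$ and $H_{i+1}$ are all defined relative to $H_i$ and would be meaningless if the robber ever slipped out; the survival hypothesis, which forces $d_{H_i}(v_i,w_i)\ge 2$ and hence makes each $\theta(v_i,w_i,H_i)$ well defined, is what I will lean on to close this loop.
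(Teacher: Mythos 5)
Your proposal is correct and takes essentially the same approach as the paper: the paper's own proof is a one-line appeal to Proposition \ref{prop: for train-chasing} iterated along the chasing function, together with the observation that each $H_{i+1}$ is an induced subgraph of $H_i$ and hence of $G$, and your write-up simply supplies the details (the induced-ness induction, the pendant structure of $v_i$ in $H_{i+1}$, the minimal-index argument for (b), and the train strategy with the confinement invariant for (d)--(e)). Your repair of the lemma's literal wording --- reading $H_{i+1}$ as the component of $v_{i+1}$ (equivalently of the robber) rather than of $v_1$, which is forced since $v_1\in X_2$ and is deleted, and reading (c) as an induced path in $G$ --- matches the paper's evident intent.
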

\begin{proof}
All of the parts follow from Proposition \ref{prop: for train-chasing} and the fact that every $H_{i+1}$ is an induced subgraph of $H_i$ and, hence, of $H_1=G$.(See Figure \ref{pic: train-chase}.)

\begin{center}
 \includegraphics[width=0.8\linewidth]{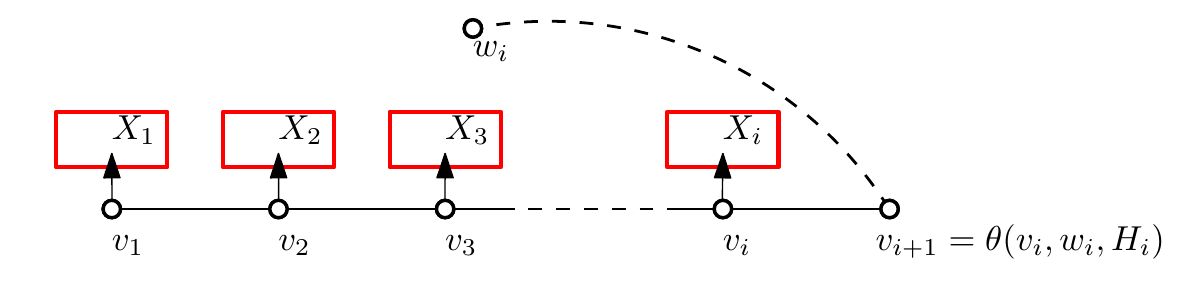}
 \captionof{figure}{Train-chasing the robber according to Lemma \ref{lemma: train-chase-robber}}\label{pic: train-chase}
\end{center}
\end{proof}
\begin{corollary}\label{cor: Pk-free-graphs}
For every integer $k\ge 3$ the class of $P_k$-free graphs is cop-bounded by $k-2$ \cite{CDM154}. Indeed, the cops need no more than $k-1$ steps to capture the robber on a $P_k$-free graph. Moreover, on a $P_k$-free graph ($k\ge 3$) there is a one-active-cop winning strategy for $k-2$ cops. 
\end{corollary}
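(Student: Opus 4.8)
The plan is to run the train-chasing machinery of Lemma~\ref{lemma: train-chase-robber} with exactly $k-2$ cops and to let $P_k$-freeness cap how far the chase can proceed. The driving observation is that a shortest path is always induced, so in a $P_k$-free graph every induced path, and in particular every geodesic, has at most $k-1$ vertices; in particular $G$ has diameter at most $k-2$. I would begin the game by massing all $k-2$ cops at a single vertex $v_1$; once the robber reveals $w_1$, fix a chasing function $\theta$ and form the sequences $v_i$, $X_i$, $H_i$ as in the lemma. By conclusion~(3) the cop positions $v_1,v_2,\dots$ always induce a path, so this trail can never reach $k$ vertices, and this is the whole source of the bound.

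The heart of the argument is to quantify when the chase must stop. Suppose the cops have been advanced so that blockers sit on $v_1,\dots,v_j$ and the robber survives in the confined subgraph $H_{j+1}$ with $d(v_j,w)\ge 2$. Then the trail $v_1,\dots,v_j$ (induced, by~(3)) followed by a $(v_j,w)$-geodesic inside $H_{j+1}$ (induced, being shortest) spans at least $j+2$ vertices, and conclusion~(2) — every edge leaving $H_{j+1}$ lands in some blocking set $X_i$, never on an interior trail vertex $v_1,\dots,v_{j-1}$ — guarantees there are no chords between the two pieces, so the concatenation is a genuine induced path. Hence $j+2\le k-1$, i.e.\ $j\le k-3$. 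Contrapositively, once $j=k-2$ the robber must satisfy $d(v_{k-2},w)\le 1$: it is already on, or adjacent to, the lead cop. This is precisely the refinement that improves the crude bound $k-1$ (which one gets from conclusion~(3) alone) down to $k-2$.

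From this the three assertions follow. Advancing the cops along the geodesics as in conclusion~(4) places blockers on $v_1,\dots,v_{k-2}$, at which point the robber is adjacent to the lead cop and is captured on the next move; since the realization of~(4) by simultaneous moves reaches $v_1,\dots,v_{k-2}$ in $k-2$ steps, capture occurs by step $k-1$, yielding both $C(G)\le k-2$ and the $k-1$-step claim. For the one-active-cop strategy I would build the same trail by relays: keeping a blocker on every current trail vertex, walk a spare cop forward one edge per step along the already-occupied (hence still robber-confining) trail until it reaches the lead, then invoke Proposition~\ref{prop: for train-chasing} to push one cop from the lead onto the next geodesic vertex. Only one cop ever moves per step, no trail vertex is ever vacated so the confinement to $H_{j+1}$ is never broken, and the trail again stops growing at $k-2$ vertices, so $k-2$ cops suffice.

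The main obstacle is exactly the chord-freeness invoked in the second paragraph: I must verify that appending the geodesic to the trail produces an induced path rather than a mere walk, and this is precisely what I would extract from conclusions~(1) and~(2) of Lemma~\ref{lemma: train-chase-robber}, namely that no vertex of the robber's region $H_{j+1}$ is adjacent to an interior trail vertex. A secondary, more bookkeeping-flavoured point is the relay choreography for the one-active-cop version: I need that relaying a cop over occupied trail vertices genuinely preserves the confinement supplied by Proposition~\ref{prop: for train-chasing}, so that the robber cannot slip back while the trail is being extended.
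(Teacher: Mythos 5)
Your proposal is correct and follows essentially the same route as the paper: run the train-chasing machinery of Lemma~\ref{lemma: train-chase-robber} with $k-2$ cops, and use $P_k$-freeness of the confined territory to conclude that the induced cop-trail concatenated with a geodesic to the robber would yield an induced path on $\ge k$ vertices, so the robber ends up adjacent to the trail and is caught on the next move, within $k-1$ steps. In fact you supply more detail than the paper's own terse proof (which compresses your chord-freeness argument into the assertion that the trail ``will be a dominating path'' for the territory, and asserts the one-active-cop claim without the relay argument you spell out); your only imprecision is citing conclusion~(2) of the lemma for chord-freeness, when the needed fact --- no vertex of $H_{j+1}$ other than $v_{i\pm1}$ is adjacent to an interior trail vertex $v_i$ --- really comes from the construction itself (any such neighbor lies in $X_i$ and was deleted), a fact you do state correctly.
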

\begin{proof}
Acoording to Lemma \ref{lemma: train-chase-robber}, starting from every vertex $v_1$ a set of $k-2$ cops can either capture the robber by the end of step $k-2$, or be positioned on $k-2$ vertices that induce a path $P$ in $G$ and restrict the robber to stay in a in induced subgraph $H$ of $G$ that contains $P$. In the latter case, since $G$ is $P_k$-free so will be $H$; thereby, $P$ will be a dominating path for $H$. Hence, the robber will be captured in the very next move of the cops. 
\end{proof}

\section{The Main Result}\label{sec: n-claw-n-net}
In this section we shall prove the following:
\begin{theorem}\label{thm: gen. copbound. diam.}Let $k\in\mathbb{N}$ and $\mathscr{H}$ be a class of graphs such that the diameter of every element in $\mathscr{H}$ is smaller than $k$. Then the class of $\mathscr{H}$-free graphs is cop bounded iff
\begin{enumerate}
\item $\mathscr{H}$ contains a path, or
\item $\mathscr{H}$ contains a generalized claw and a generalized net.
\end{enumerate}
\end{theorem}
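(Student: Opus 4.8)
I will prove both implications. Since every member of $\mathscr{H}$ has diameter $<k$, every member of $\mathscr{H}$ is connected (a disconnected graph has infinite diameter); I use this repeatedly. For sufficiency, the path condition is immediate from Corollary~\ref{cor: Pk-free-graphs}, and the claw-and-net condition reduces to a single clean statement which is the crux. For necessity I exhibit cop-unbounded $\mathscr{H}$-free families built from cubic graphs.

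\textbf{Sufficiency.} If $\mathscr{H}$ contains a path $P_m$, then every $\mathscr{H}$-free graph is $P_m$-free, so by Corollary~\ref{cor: Pk-free-graphs} the class is cop-bounded (by $m-2$). Now suppose $\mathscr{H}$ contains a generalized claw and a generalized net. If the claw is degenerate (a leg absent), it is a path, so condition~1 applies and we are done; otherwise write the claw as $H_a(a_1,a_2,a_3)$ with all $a_i\ge 1$ and the net as $H_c(c_1,c_2,c_3)$, and set $N=\max_i\{a_i,c_i\}$. Then $H_a(a_1,a_2,a_3)$ is an induced subgraph of the $N$-claw $H_a(N)$ and $H_c(c_1,c_2,c_3)$ is an induced subgraph of the $N$-net $H_c(N)$, so every $\mathscr{H}$-free graph is $\{H_a(N),H_c(N)\}$-free. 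It therefore suffices to show that for every $N$ the class of $\{H_a(N),H_c(N)\}$-free graphs is cop-bounded.

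\textbf{The crux, via train-chasing.} Fix $k$ (roughly linear in $N$) and apply Lemma~\ref{lemma: train-chase-robber}: if the robber survives the first $k$ moves, the cops occupy an induced path $P=v_1\cdots v_{k+1}$ and the robber is confined to the induced subgraph $H_{k+1}$, with every edge leaving $H_{k+1}$ landing in the already-cleared sides $\bigcup_i X_i$. Here $N$-claw-freeness keeps the robber near $P$: were the robber ever at a vertex $r$ with distance $\ge N$ to $P$, whose nearest point $v_j$ on $P$ lies at distance $\ge N$ from both ends, then the two sub-paths of $P$ issuing from $v_j$ together with a shortest $r$-to-$P$ path would induce $H_a(N)$ (the three arms are pairwise nonadjacent off $v_j$ since $P$ is induced and the $r$-to-$P$ path is a geodesic to $P$), a contradiction. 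Thus the robber is trapped in a bounded-radius tube around $P$. Then $N$-net-freeness should bound the tube's cross-sectional structure---independent strands running parallel to $P$ would close up into an induced $H_c(N)$---so that a bounded number of extra cops can dominate each cross-section while the train slides from $v_1$ toward $v_{k+1}$, shrinking the robber's territory to nothing.

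\textbf{Necessity.} I argue the contrapositive: if $\mathscr{H}$ contains neither a path nor both a generalized claw and a generalized net, then $\mathscr{H}$ contains no path and either (A)~no generalized claw or (B)~no generalized net. Take connected cubic graphs $G_n$ with $C(G_n)\to\infty$ (these exist since $3$-regular graphs are cop-unbounded). In case (A) let $G_n'$ be the $k$-subdivision of $G_n$; in case (B) let $G_n'$ be the clique substitution of the $k$-subdivision of $G_n$. Since neither clique substitution nor $k$-subdivision reduces the cop number, $C(G_n')\ge C(G_n)\to\infty$. The structural point is that the branch vertices (case A) and the triangles (case B) are pairwise at distance $\ge k$, so every connected induced subgraph of $G_n'$ of diameter $<k$ meets at most one of them and is hence a path or a generalized claw in case (A), and a path or a generalized net in case (B) (the degenerate nets---triangles with fewer than three arms, e.g.\ the bull $H_c(1,1,0)$---arise here, which is exactly why the bull counts as a generalized net). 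As every member of $\mathscr{H}$ is connected of diameter $<k$ and is neither a path nor a generalized claw (case A) / generalized net (case B), no member of $\mathscr{H}$ embeds as an induced subgraph, so each $G_n'$ is $\mathscr{H}$-free, giving the desired cop-unbounded families.

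\textbf{Main obstacle.} Necessity and the $P_m$ case are routine given the developed tools; the real work is cop-boundedness of $\{H_a(N),H_c(N)\}$-free graphs, and within it the net-based tube sweep. Making precise that $H_c(N)$-freeness caps the number of cops needed to dominate and advance the guarding wall is a quantitative, $N$-scaled generalization of the layer analysis behind Lemma~\ref{lemma: structure cw-bl-free}(d) and the territory-reduction strategy of Theorem~\ref{thm: cops-cw-bl-free}(c) (the case $N=1$). Turning the ``parallel strands close into an $N$-net'' intuition into a rigorous bound---by tracing BFS-geodesics of length $N$ back from an over-large independent set and exhibiting an induced $H_a(N)$ or $H_c(N)$---is where I expect the difficulty to lie.
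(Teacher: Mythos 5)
Your necessity argument and your reduction of sufficiency to the two-graph case are sound and essentially identical to the paper's: the paper also uses $k$-subdivisions of cubic graphs ($\mathscr{G}_1$) and their clique substitutions ($\mathscr{G}_2$), observing that the only induced subgraphs of diameter $<k$ in these families are paths and generalized claws, respectively paths and generalized nets; and it likewise takes $n$ to be the maximum leg length so that every $\mathscr{H}$-free graph is $\{H_a(n),H_c(n)\}$-free. But there is a genuine gap at exactly the point you flag as the ``main obstacle'': the statement that $\{H_a(N),H_c(N)\}$-free graphs are cop-bounded is the heart of the theorem, and you only sketch it. The paper proves it as a standalone result (Theorem \ref{thm: general claw and net excluded}: every $\{H_a(n),H_c(n)\}$-free graph is $4n$-copwin), and its mechanism is different from your tube sweep. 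After train-chasing (Lemma \ref{lemma: train-chase-robber}) places $4n$ cops on an induced path split into blocks $V_1,\dots,V_4$ of size $n$, the paper proves a neighborhood-covering claim: for every vertex $w$ of the robber's territory, $N(w)\cap N(V_i)\subseteq N(V_1)\cup N(V_{j-1})\cup N(V_j)$ for the interior blocks, because a violating vertex $x$ together with the guarded path and the robber's escape path to $\alpha_j$ would induce an $n$-claw or an $n$-net. This means the interior block of cops is redundant for containment, so it can be \emph{recycled} to extend the guarded path by a new block $V_{q+1}$, shrinking the territory with only $4n$ cops total and no analysis of cross-sections at all.

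Your sketch, by contrast, has two unrepaired holes. First, the claw argument confines the robber only when its nearest attachment point $v_j$ on $P$ is at distance $\ge N$ from \emph{both} ends of $P$; the robber can lawfully sit arbitrarily far from $P$ beyond the live end (indeed it must be able to, since the territory is exactly what has not yet been swept), so ``trapped in a bounded-radius tube around $P$'' is false as stated. Second, the claim that $N$-net-freeness bounds the domination number of cross-sections, and that a bounded team can sweep them, is asserted with ``should'' and never reduced to an exhibit of an induced $H_a(N)$ or $H_c(N)$; generalizing Lemma \ref{lemma: structure cw-bl-free}(d) from $N=1$ to general $N$ is nontrivial and is not attempted. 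Since the theorem's sufficiency in the claw-and-net case rests entirely on this unproven lemma, the proposal as written does not constitute a proof; the fix is to replace the tube sweep with the covering-and-recycling argument of Theorem \ref{thm: general claw and net excluded} (or to actually carry out the quantitative layer analysis you describe).
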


\begin{theorem}\label{thm: general claw and net excluded}
If $\mathscr{H}=\{H_a(n),H_c(n)\}$ for some $n\in\mathbb{N}$ then every $\mathscr{H}$-free graph $G$ is $4n$-copwin.
\end{theorem}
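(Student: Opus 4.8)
```latex
The plan is to combine the ``train-chasing'' machinery of Lemma~\ref{lemma: train-chase-robber} with the forbidden-subgraph hypothesis to show that after a bounded chase the robber is confined to a region whose local structure is so restricted that a constant-size team can finish it off. First I would fix a chasing function $\theta$ for $G$ and run the train-chase of Lemma~\ref{lemma: train-chase-robber} with $k$ cops for $k$ steps, arriving at the situation where cops $C_1,\dots,C_k$ sit on the induced path $v_1,\dots,v_{k+1}$ and the robber is trapped in the induced subgraph $H_{k+1}$, with the key containment property that any edge of $G$ leaving $H_{k+1}$ must land in $\bigcup_1^k X_i$. The guiding intuition is that if the robber can evade capture for a long time, the vertices he visits, together with the chased path behind him, should expose either a long induced path or an induced generalized claw/net; since both $H_a(n)$ and $H_c(n)$ are forbidden, neither can occur, and this bounds how long the chase can fruitfully continue.

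Next I would set the number of cops to $4n$ and explain how to partition them into groups that play distinct roles: a ``train'' that performs the chase and pins down a short geodesic segment near the robber, together with auxiliary groups that guard the handful of branch vertices where the robber might otherwise slip sideways. The point of forbidding $H_a(n)=H_a(n,n,n)$ is that no vertex can have three pairwise-nonadjacent ``escape directions'' each supporting a path of $n$ vertices; the point of forbidding $H_c(n)$ is the analogous statement anchored on a triangle rather than a single vertex. I would argue that these two exclusions jointly force the neighborhood structure around the robber's confinement region to collapse into boundedly many ``fans'' or cliques, in the same spirit as Lemma~\ref{lemma: structure cw-bl-free}, so that a constant number of cops can dominate the frontier $\bigcup_1^k X_i$ and the robber's local escape routes simultaneously.

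The crux of the argument, and the step I expect to be the main obstacle, is the case analysis that converts ``the robber survives $N$ steps'' into the forced appearance of a forbidden induced subgraph. Concretely I would track, as the chase proceeds, a geodesic spine trailing the robber and at least one branching witness wherever the robber turns; if the spine ever reaches length $3n$ without capture, an induced $P_{3n}$-type subpath threatens to contain $H_a(n)$ or $H_c(n)$ after the clique-substitution/subdivision-invariance of cop number (the earlier lemma on clique substitution and $k$-subdivision) is invoked to normalize the local picture. I would have to verify carefully that the three legs of the would-be claw or net are pairwise nonadjacent and of the correct length $n$, which is where the diameter bound on $\mathscr{H}$ and the geodesic choices of $\theta$ must be used delicately to rule out shortcutting chords. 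Assuming these chords are controlled, the absence of both forbidden graphs caps the number of distinct escape branches the robber can create, and a counting argument shows that $4n$ cops suffice to occupy every branch and the chased path at once, at which point the robber's territory is a dominated set and capture follows in one more move.

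Finally I would assemble these pieces into a clean recursion: maintain the invariant that some bounded group of cops occupies a dominating configuration of the robber's current confinement region, and show each round either captures the robber or strictly shrinks the region, so that finiteness of $G$ forces termination. The budget of $4n$ should emerge as four groups of $n$ cops, one group per potential escape leg of a generalized claw together with the triangle-anchored legs of a generalized net; I would confirm that no fifth simultaneous leg can exist precisely because its existence would reconstruct a forbidden $H_a(n)$ or $H_c(n)$.
```
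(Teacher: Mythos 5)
There is a genuine gap: your proposal never isolates the one combinatorial fact that makes the paper's strategy work, and the mechanism you substitute for it is wrong. The paper's proof maintains a growing \emph{induced} path $P^{[j]}$ built from blocks $V_1,\dots,V_j$ of $n$ vertices each, keeps cops only on $V_1$, $V_{j-1}$, $V_j$, and proves the key claim that for every $w'$ in the robber's territory and every interior block $V_i$, $i\in[2\dcdot(j-2)]$, one has $N(w')\cap N(V_i)\Seq N(V_1)\cup N(V_{j-1})\cup N(V_j)$. The reason is that a vertex $x$ attached to the interior of the path but not near its guarded ends, together with the two long path segments of $P^{[j]}$ on either side of the attachment and an escape path from $w'$ to $\alpha_j$ through the territory, induces exactly an $H_a(n)$ (when the attachment creates a degree-three vertex) or an $H_c(n)$ (when $x$ is adjacent to two consecutive path vertices, creating a triangle). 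This claim is what frees the $n$ cops on an interior block so they can extend the path by a new block $V_{j+1}$ via Lemma~\ref{lemma: train-chase-robber}, and it yields the clean accounting $4n = 3n$ stationary $+\, n$ moving. Your version of the crux asserts instead that a long geodesic spine ``threatens to contain $H_a(n)$ or $H_c(n)$'' --- but an induced path contains neither graph (both require a degree-three vertex or a triangle), so surviving a long chase alone produces no forbidden subgraph; you need the side-attachment configuration above, and you never construct its three pairwise-nonadjacent legs.

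Two further steps in your outline would fail as written. First, invoking the clique-substitution/$k$-subdivision lemma to ``normalize the local picture'' misuses it: that lemma says the operations do not \emph{reduce} cop number and serves only the necessity (lower-bound) direction of Theorem~\ref{thm: gen. copbound. diam.}; it gives no license to modify the fixed graph $G$ while designing a winning strategy on it. Second, your plan to have a constant group ``dominate the frontier $\bigcup_1^k X_i$'' is unjustified --- that frontier can be arbitrarily large, and nothing in $\{H_a(n),H_c(n)\}$-freeness collapses it into boundedly many cliques in the manner of Lemma~\ref{lemma: structure cw-bl-free}, which concerns the much stronger hypotheses of claw-/bull-/net-/antenna-freeness ($n=1$ territory). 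The paper never dominates the robber's region at all; it only blocks boundary crossings by occupying three blocks of the path, which is precisely what the key claim makes possible. Likewise your final budget heuristic (``one group of $n$ per potential leg'') does not correspond to a strategy; without the interior-block claim there is no reason any cops are ever freed, and your recursion has no engine to shrink the territory.
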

\begin{proof}
\setcounter{claimCount}{0}
By Lemma \ref{lemma: train-chase-robber} we may assume that we have the cops initially covering all the vertices of an induced $4n$ path $P^{[4]}:v_1,\dots,v_{4n}$ and that there is path $Q^{[4]}$ in $G$ from robber's position to $v_{4n}$ with a second last vertex $\alpha_4$ such that 
$$\left(V\left(Q^{[4]}\right)\setminus\{\alpha_4,v_{4n}\}\right)\cap N\left(P^{[4]}\right)=\varnothing .$$
Note that as such the robber is forced to stay in the component, say $G_4$, of $v_1$ in $G-(N(V(P^{[4]}))\setminus \{\alpha_4\})$.


We define the present robber's territory by $\mathscr{R}_4:=V(G_4)\setminus N[V(P^{[4]}]$. We put the vertices of $P^{[4]}$ in four disjoint sets of $n$ vertices $V_1,\dots,V_4$, such that $V_i:=\{v_j: j\in[n(i-1)+1\dcdot ni]\}$, and denote the cops covering $V_i$ by $\mathscr{C}_i$. Since $G$ is $\mathscr{H}$-free, for every $w\in\mathscr{R}_4$ we must have
\begin{equation*}
N(w)\cap N(V_2)\Seq N(V_1)\cup N(v_3)\cup N(v_4).
\end{equation*}

\noindent(See Figure \ref{pic: gen-cl-net-free}.) Hence, $\mathscr{C}_2$ can be freed and, hence, moved in the next $3n$ steps to either capture the robber or cover another set $V_5$ of $n$ vertices to be augmented with $V(P^{[4]}$ such that
\begin{enumerate}
\item $V_5\cap N[V(P^{[4]}]=\{\alpha_4\}$, and
\item $V_1,\dots,V_5$ form a $5n$-path $P^{[5]}$ induced in $G_4$ from $v_1$ to, say, $v_{5n}$.
\end{enumerate}
\begin{figure}[H]
\begin{center}
 \includegraphics[width=0.75\linewidth]{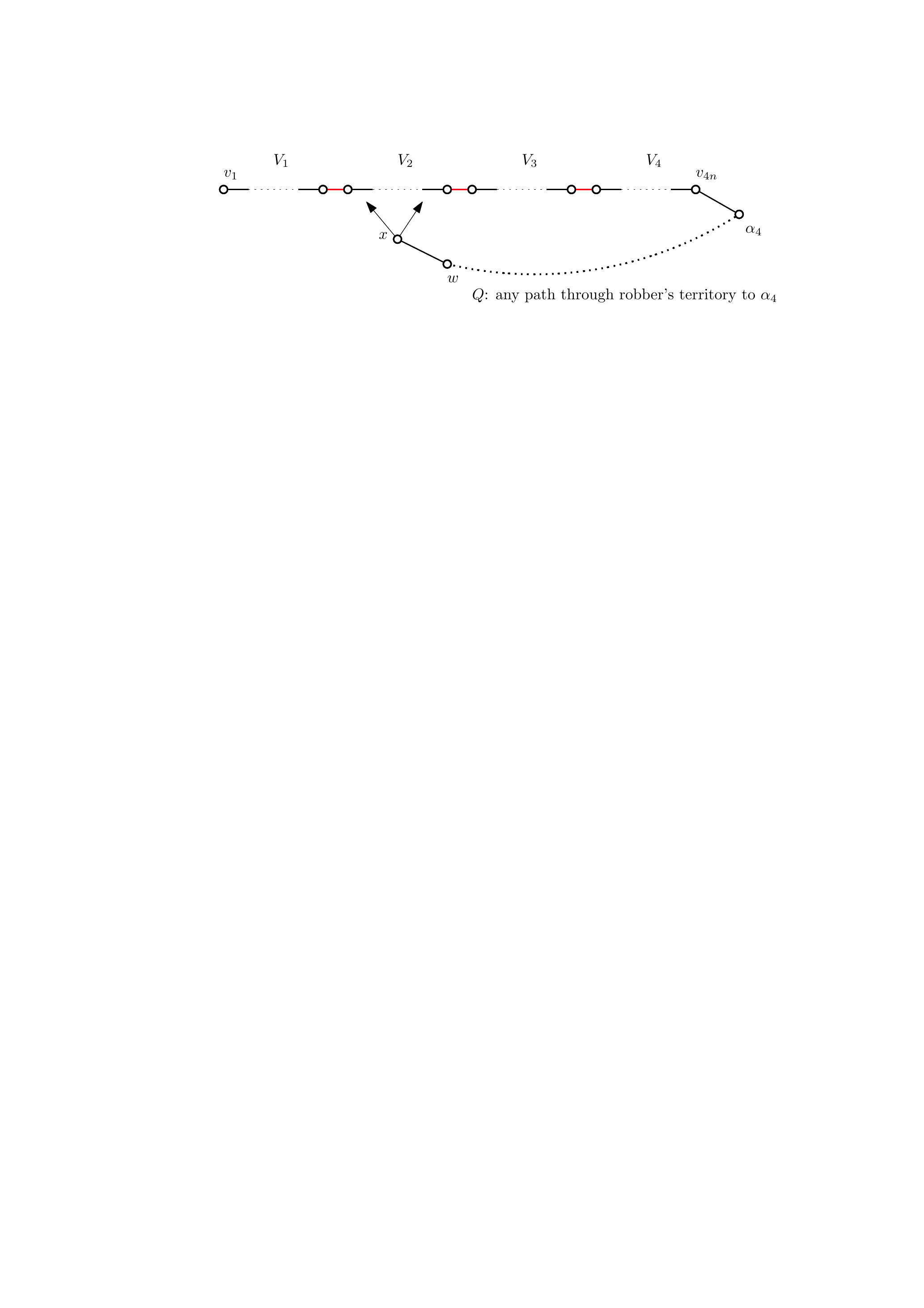}
 \caption{If $wx\in E(G)$ where $w\in\mathscr{R}_4$ and $x\in N(V_2)\setminus N(V_1\cup V_3 \cup V_4)$, for each path $Q$ from $w$ to $\alpha_4$ through $\mathscr{R}_4$ then vertices of $G[(\cup_1^4 V_i)\cup V(Q)\cup \{x\}]$ contains an induced $n$-claw or an induced $n$-net.} \label{pic: gen-cl-net-free}
\end{center}
\end{figure}
We define the subsequent robber's territories, positions of the cops, and paths $P^{[j]}$, $Q^{[j]}$ and vertices $\alpha_j$ ($j> 4$) in the obvious recursive way. Again, as $G$ is $\mathscr{H}$-free, the following holds:
\begin{claim}\label{claim: n-claw-n-net}
Let $w'\in\mathscr{R}_j$ ($j\ge 4$) and suppose we have the sets of cops $\mathscr{C}_1$, $\mathscr{C}_{j-1}$, and $\mathscr{C}_{j}$ covering $V_1$, $V_{j-1}$ and $V_j$. Then for each $i\in[2\dcdot (j-2)]$ we have
\begin{equation*}
N(w')\cap N(V_i)\Seq N(V_1)\cup N(V_{j-1})\cup N(V_j).
\end{equation*}
\end{claim}
\noindent Claim \ref{claim: n-claw-n-net} allows us to get $n$ cops free and then, in light of Lemma \ref{lemma: train-chase-robber}, use them to either capture the robber or augment the current path, say $P^{[q]}$, with a set $V_{q+1}$ of $n$ vertices; thereby shrink the robber's territory. Hence, $4n$ cops have a winning strategy on $G$.


\begin{itemize}
\item robber's territory $\mathscr{R}_4:=V(G_4)\setminus N[V(P^{[4]}]$.
\item $V_i:=\{v_j: j\in[n(i-1)+1\dcdot ni]\}$.
\item $\mathscr{C}_i:$ the set of cops occupying $V_i$.
\end{itemize}
For every $w\in\mathscr{R}_4$ we must have
\begin{equation*}
N(w)\cap N(V_2)\Seq N(V_1)\cup N(V_3)\cup N(V_4),
\end{equation*}
since $G$ is $\mathscr{H}$-free. (See Figure \ref{pic: gen-cl-net-free}.)

Within the next $3n$ steps cops in $\mathscr{C}_2$ can either capture the robber or cover another set $V_5$ of $n$ vertices such that
\begin{itemize}
\item $V_5\cap N[V(P^{[4]}]=\{\alpha_4\}$
\item $V_1,\dots,V_5$ form a $5n$-path $P^{[5]}$ induced in $G_4$ from $v_1$ to, say, $v_{5n}$.
\end{itemize}

\end{proof}

\begin{proof}[Proof of Theorem \ref{thm: gen. copbound. diam.}]\textbf{(Necessity.) }Let $\mathscr{G}_1$ be the class of graphs obtained by $k$-subdividing the cubic graphs, and $\mathscr{G}_2$ be the class obtained by applying clique  substitution to the graphs in $\mathscr{G}_1$. By Theorem \ref{}, $\mathscr{G}_1$ and $\mathscr{G}_2$ are both cop-unbounded. Moreover, the only possible induced subgraphs of graphs in $\mathscr{G}_1$ (resp. $\mathscr{G}_2$)  with diameter $<k$ are paths and generalized claws (reps. paths and generalized nets). \noindent\textbf{(Sufficiency.) }If $\mathscr{H}$ contains a path then copboundedness follows from Corollary \ref{cor: Pk-free-graphs}. Hence, we may assume $\mathscr{H}$ contains an $H_a(n_1,n_2,n_3)$ and an $H_c(m_1,m_2,m_3)$ for some $n_j,m_j\in\mathbb{N}\cup\{0\}$. Let $n$ be the maximum of $n_j,m_j$'s. As such, every $\mathscr{H}$-free graph will be $\{H_a(n),H_c(n)\}$-free graphs. Hence, according to Theorem \ref{thm: general claw and net excluded}, the class of $\mathscr{H}$-free graphs is cop-bounded by $4n$. 

\end{proof}

\section{A Generalization}\label{sec: general disconn. cop-bounded}
In light of Lemma \ref{lemma: train-chase-robber}, one can show the following generalization of Theorem \ref{thm: gen. copbound. diam.}:
\begin{theorem}\label{thm: gen.gen. copbound. diam.}
Let $k\in\mathbb{N}$ and $\mathscr{H}$ be a class of graphs such that the diameter of every component of each element in $\mathscr{H}$ is smaller than $k$. Then the class of $\mathscr{H}$-free graphs is cop bounded iff
\begin{enumerate}
\item $\mathscr{H}$ contains a forest of paths, or
\item $\mathscr{H}$ contains two graphs $F_1$, $F_2$ each having at least one degree three vertex such that every component of $\mathscr{F}_1$ is a path or a generalized claw, and every component of $\mathscr{F}_2$ is a path or a generalized net.
\end{enumerate}
\end{theorem}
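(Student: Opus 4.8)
The strategy is to mirror the proof of Theorem~\ref{thm: gen. copbound. diam.}, but to replace the single forbidden claw and net by the forests $F_1$ and $F_2$, handling one component at a time. The necessity direction should go through with essentially no change: I would reuse the two families $\mathscr{G}_1$ (a $k$-subdivision of cubic graphs) and $\mathscr{G}_2$ (its clique substitution), both cop-unbounded. The only modification is that now the forbidden induced subgraphs whose \emph{components} have diameter $<k$ are forests of paths (for both families), or forests whose components are paths together with at least one generalized claw in $\mathscr{G}_1$ (respectively at least one generalized net in $\mathscr{G}_2$). Thus if $\mathscr{H}$ avoids both conditions, every member of $\mathscr{H}$ either fails the diameter bound or survives as an induced subgraph of arbitrarily large members of $\mathscr{G}_1$ or $\mathscr{G}_2$, giving cop-unboundedness.

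\textbf{Sufficiency.}
For sufficiency, case~(1) is immediate: a forest of paths on $N$ vertices is an induced subgraph of $P_{N}$, so an $\mathscr{H}$-free graph excluding a path-forest is $P_{N'}$-free for a suitable $N'$ and Corollary~\ref{cor: Pk-free-graphs} applies. So the substantive case is~(2). Here I would pass to a single pair of ``worst-case'' disconnected obstructions: let $n$ bound all the path-lengths and all the generalized-claw/net parameters appearing in the components of $F_1,F_2$, let $t$ bound the number of components, and observe that every $\mathscr{H}$-free graph is free of the disjoint union of $t$ copies of $H_a(n)$ (call it $tH_a(n)$) and of $tH_c(n)$. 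This reduces the theorem to proving: every $\{tH_a(n),tH_c(n)\}$-free graph is cop-bounded.

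\textbf{The core argument.}
To prove that reduction, I would run the same train-chasing/territory-shrinking scheme as in Theorem~\ref{thm: general claw and net excluded}, but with more cops. The key local fact there was that, for a robber vertex $w$ in the territory, a neighbour $x$ of the middle group $V_i$ that is not attached to the guarded endpoints forces an induced $H_a(n)$ or $H_c(n)$ on $(\bigcup V_i)\cup V(Q)\cup\{x\}$. In the disconnected setting one such configuration only forbids a \emph{single} claw or net component; to forbid $tH_a(n)$ I must exhibit $t$ \emph{vertex-disjoint} such configurations simultaneously. The natural device is to guard $t$ separated blocks along the chased path at once, so that $t$ independent bad neighbours would assemble into $t$ disjoint generalized claws (or nets). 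Concretely, I expect the cop count to scale like $O(tn)$: roughly $t$ parallel copies of the $4n$-cop routine of Theorem~\ref{thm: general claw and net excluded}, coordinated by the same chasing function from Lemma~\ref{lemma: train-chase-robber} so that the guarded vertices always induce paths and the robber's territory is a shrinking induced subgraph.

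\textbf{The main obstacle.}
The hard part is precisely the combinatorial bookkeeping that turns ``$t$ bad neighbours'' into ``$t$ \emph{vertex-disjoint} induced claws/nets''. The connecting paths $Q^{[j]}$ through the robber's territory used to build each claw/net may overlap or interfere, and the endpoint vertices $\alpha_j$ shared between consecutive blocks can create unwanted edges that destroy inducedness. I would control this by insisting the $t$ guarded blocks be pairwise at distance $>2$ along the chased path, which by the structure guaranteed in Lemma~\ref{lemma: train-chase-robber}(2) keeps the corresponding gadgets non-adjacent, and by choosing the auxiliary paths to be disjoint geodesics inside distinct portions of the territory. Once disjointness and inducedness are secured, the contradiction with $\{tH_a(n),tH_c(n)\}$-freeness is the same as in the connected case, and the freeing/advancing of a block of cops proceeds verbatim, yielding a winning strategy for $O(tn)$ cops and hence cop-boundedness of the $\mathscr{H}$-free class.
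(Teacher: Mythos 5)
Your necessity argument and your initial reduction are sound and match the paper: the paper likewise reuses $\mathscr{G}_1,\mathscr{G}_2$ and observes that their induced subgraphs with all components of diameter $<k$ are path-forests together with generalized claws (resp.\ nets), and it reduces sufficiency to the pair $\{l\cdot P_n+s\cdot H_a(n),\, l\cdot P_n+t\cdot H_c(n)\}$, which is equivalent to your absorption of the path components into $t\cdot H_a(n)$ and $t\cdot H_c(n)$. The divergence, and the genuine gap, is in the core of the sufficiency proof. Your plan is to guard $t$ separated blocks simultaneously so that $t$ ``bad neighbours'' assemble into $t$ vertex-disjoint gadgets, but nothing in the game forces the robber to produce $t$ \emph{independent} bad attachments at once: a single bad vertex $x$ may attach to several middle blocks, and several gadgets may be forced to share the same robber-side vertex $w$ or the same connecting path $Q$ through the territory, in which case only one gadget exists and $\{tH_a(n),tH_c(n)\}$-freeness gives no contradiction, so no group of cops can be freed and the strategy stalls. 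Moreover, a disjoint union occurs as an \emph{induced} subgraph only if there are no edges between the copies; your distance-$>2$ separation along the chased path controls the path-portions of the gadgets, but not adjacencies among the $x_j$'s and the $Q_j$-portions inside the territory, and you would additionally need a pigeonhole count (each configuration yields ``a claw \emph{or} a net'', so $t$ gadgets of mixed type do not suffice; one needs roughly $2t-1$ to guarantee $t$ of one type). You flag this bookkeeping as the main obstacle but do not resolve it, and it is not clear it can be resolved in the form you propose.

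The paper sidesteps all of this with a different mechanism: induction on $s+t$ for the classes $\mathscr{G}_{s,t}$ of $\{s\cdot H_a(n),t\cdot H_c(n)\}$-free graphs, with bound $4(n+1)(s+t-1)$. One runs the strategy of Theorem~\ref{thm: general claw and net excluded} until a \emph{single} bad attachment appears; this yields one induced $n$-claw or $n$-net on at most $3(n+1)$ vertices, and the cops are parked on it \emph{permanently}. The robber's subsequent territory is then non-adjacent to the parked gadget by the game dynamics --- exactly the non-adjacency you were trying to secure combinatorially, here obtained for free --- so that territory lies in $\mathscr{G}_{s-1,t}$ or $\mathscr{G}_{s,t-1}$, and the induction hypothesis finishes with $4(n+1)(s+t-2)$ further cops. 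Finding one gadget at a time and trading cops for a smaller forbidden family is the missing idea in your proposal; with it, the interference and disjointness problems you identify never arise.
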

\begin{proof}\setcounter{claimCount}{0}
Let $\mathscr{G}_1$ and $\mathscr{G}_2$ be the cop-unbounded classes of graphs as in the proof of Theorem \ref{thm: gen. copbound. diam.}. Then the only induced subgraphs of $\mathscr{G}_1$ with each component of diameter $<k$ are forests with each component either a path or a generalized claw. Similarly, the only induced subgraphs of $\mathscr{G}_2$ with each component of diameter $<k$ are graphs with each component either a path or a generalized net. Hence, if no member of $\mathscr{H}$ is a forest of path, $\mathscr{H}$ must contain a forest  with at least one degree three vertex having every component a path or a  generalized claw. In addition, $\mathscr{H}$ must contain a graph with maximum degree three every component of whose is a path or a generalized net. This establishes the necessity of the conditions. For the reverse direction, likewise in the proof of Theorem it suffices to assume $\mathscr{H}$ is of the form $\{l\cdot P_n+s\cdot H_a(n), l\cdot P_n+t\cdot H_c(n)\}$ where $k,s,t\in\mathbb{N}$. As such, in light of Lemma \ref{lemma: train-chase-robber} with $(n+1)l$ cops and within as many steps of the game the cops can either capture the robber or get positioned to cover all vertices of an $(n+1)k$-path to restrict the robber to a territory which is $\{s\cdot H_a(n),t\cdot H_c(n)\}$-free. Hence, to complete the proof it suffices to show the following:
\begin{claim} Let $n\in\mathbb{N}$ be fixed. For each pair $(s,t)\in\mathbb{N}^2$ denote the class of $\{s\cdot H_a(n), t\cdot H_c(n)\}$-free graphs simply by $\mathscr{G}_{s,t}$. Then $\mathscr{G}_{s,t}$ is cop-bounded by $4(n+1)(s+t-1)$.
\end{claim}
\begin{claimproof}
We shall use induction on $s+t$. The case $s+t=2$, i.e. when $s=t=1$, follows from the Theorem \ref{thm: gen. copbound. diam.}. If $s+t\ge 3$, starting from any vertex $v_1$ we follow the strategy and notation described in the theorem until getting to a path $v_1,\dots,v_{kn}$ with cops on $V_1$, $V_{k-1}$ ,$V_k$ such that there is a vertex in robber's territory adjacent to some $V_i$, $1<i<k-1$, and non-adjacent to $V_1,V_{k-1},V_k$. As such, there will be an $n$-claw or an $n$-net induced on $\dots$, and hence of at most $3(n+1)$ vertices. Then we must have $s\ge 2$ or $t\ge 2$ according as such a graph is an $n$-claw or an $n$-net. Covering the vertices o Without loss of generality, let that be an $n$-claw. Then we must have $s\ge 2$ and keeping some cops on its vertices restricts the robber to a territory in $\mathscr{G}_{s-1,t}$ (with $s\ge 2$) or $\mathscr{G}_{s,t-1}$ (with $t\ge 2$); which will be $4(n+1)(s+t-2)$-copwin, by the induction hypothesis. Hence, every $\mathscr{G}_{s,t}$ is $4(n+1)(s+t-1)$-copbounded, as desired.
\end{claimproof} 

\end{proof}
\section*{Acknowledgments}
We would like to thank Pavol Hell and Bojan Mohar for many useful discussions and for their valuable suggestions throughout this research. We would also like to thank Seyyed Aliasghar Hosseini for useful discussions on some of the methods utilized in \cite{CDM154}. Special thanks to Jozef Hale\u{s} for many useful discussions that led us to Theorem \ref{thm: cops-cw-bl-free}(a).

\bibliographystyle{plain}

\end{document}